\documentclass[reqno,a4paper,11pt]{amsart}

\usepackage{amsmath,amssymb,amstext,amsthm}
\usepackage[UKenglish]{babel}
\usepackage{hyperref}
\usepackage{amsfonts}
\usepackage{amssymb}
\usepackage{amsthm}
\usepackage{amsmath}
\usepackage{graphicx}
\usepackage{comment}
\usepackage{url}
\usepackage[all]{xy}
\usepackage{lscape}
\usepackage{mathtools}
\usepackage{color}
\usepackage{tikz}
\usepackage{tikz-cd}
\usepackage{float}
\usetikzlibrary{shapes.geometric}
\usetikzlibrary{shapes}
\usetikzlibrary{arrows}
\usetikzlibrary{decorations.markings}
\usetikzlibrary{tqft}
\usetikzlibrary{positioning}
\usetikzlibrary{patterns}
\usepackage{dsfont}
\usepackage{tikz-cd}
\usepackage{enumerate}
\usepackage{fancyhdr}
\usepackage{enumitem}
\usepackage{hyperref}

\newcommand{\Stab}{\mbox{\rm Stab}}
\newcommand{\Ind}{\mbox{\rm Ind}}
\newcommand{\triv}{\textnormal{triv}}
\newcommand{\Deck}{\textnormal{Deck}}
\newcommand{\comm}{\textnormal{comm}}
\newcommand{\im}{\textnormal{im}}
\newcommand{\id}{\textnormal{id}}
\newcommand{\prim}{\textnormal{prim}}
\newcommand{\Irr}{\textnormal{Irr}}
\newcommand{\Aut}{\textnormal{Aut}}

\newcommand{\Res}{\textnormal{Res}}

\newcommand{\modd}{\textnormal{mod}}
\newcommand{\Span}{\mbox{\rm Span}}

\newcommand{\Ab}{\rm Ab}
\newcommand*\from{\colon}
\newcommand{\Z}{\mathbb{Z}}

\newcommand{\C}{\mathbb{C}}
\newcommand{\Q}{\mathbb{Q}}

\newcommand{\N}{\mathbb{N}}

\theoremstyle{plain}
\theoremstyle{definition}

\newtheorem*{theorem*}{Theorem}

\newtheorem*{lemma*}{Lemma}

\newtheorem*{claim*}{Claim}
\newtheorem{propo}{Proposition}[section]
\newtheorem{dfn}[propo]{Definition}
\newtheorem*{dfn*}{Definition}

\newtheorem{lem}[propo]{Lemma}
\newtheorem{corol}[propo]{Corollary}
\newtheorem{theor}[propo]{Theorem}

\newtheorem{examp}[propo]{Example}

\newtheorem*{rem*}{Remark}

\newtheorem*{ques*}{Question}

\title[Subrepresentations in the homology of finite covers of graphs]{Subrepresentations in the homology \\ of finite covers of graphs }
\author[Xenia Flamm]{Xenia Flamm}
\address{Department of Mathematics, ETH Z\"{u}rich, Switzerland}
\email{xenia.flamm@math.ethz.ch}

\begin{document}

\tikzset{->-/.style={decoration={
  markings,
  mark=at position #1 with {\arrow{>}}},postaction={decorate}}}
  
\tikzset{middlearrow/.style={
        decoration={markings,
            mark= at position 0.5 with {\arrow{#1}} ,
        },
        postaction={decorate}
    }
}

\setcounter{MaxMatrixCols}{30}
\definecolor{light-gray}{gray}{0.7}
\definecolor{dark-gray}{gray}{0.2}


\def\subjclassname{\textup{2020} Mathematics Subject Classification}
\expandafter\let\csname subjclassname@1991\endcsname=\subjclassname
\subjclass{57M10, 
57M60, 
20C15} 

\begin{abstract}
Let $p \from Y \to X$ be a finite, regular cover of finite graphs with associated deck group $G$, and consider the first homology $H_1(Y;\C)$ of the cover as a $G$-representation.
The main contribution of this article is to broaden the correspondence and dictionary between the representation theory of the deck group $G$ on the one hand, and topological properties of homology classes in $H_1(Y;\C)$ on the other hand.
We do so by studying certain subrepresentations in the $G$-representation $H_1(Y;\C)$.

The homology class of a lift of a primitive element in $\pi_1(X)$ spans an induced subrepresentation in $H_1(Y;\C)$, and we show that this property is never sufficient to characterize such homology classes if $G$ is Abelian.
We study $H_1^{\comm}(Y;\C) \leq H_1(Y;\C)$---the subrepresentation spanned by homology classes of lifts of commutators of primitive elements in $\pi_1(X)$.
Concretely, we prove that the span of such a homology class is isomorphic to the quotient of two induced representations.
Furthermore, we construct examples of finite covers with $H_1^{\comm}(Y;\C) \neq \ker(p_*)$.
\end{abstract}

\maketitle
\section{Introduction}
We study subrepresentations in the homology of finite covers of finite graphs, viewed as a representation of the deck group.
The subrepresentations we consider arise from homology classes of lifts of closed curves that have interesting topological properties, for example being primitive.
One goal is to better understand the connection between topological properties of curves, and representation-theoretic properties of their homology classes.
For a finite, regular cover $p \from Y \to X$ of finite graphs, with associated deck group $G$, we know that $H_1(Y ; \C$) is spanned by a finite set of closed loops.
Let $n$ be the rank of the free group $\pi_1(X)$.
For a subset $S \subseteq F_n = \pi_1(X)$, we consider the subrepresentation
\[
H_1^S(Y;\C) \coloneqq \Span_{\C [G]} \left\{ [ \tilde{s} ] \mid s \in S \right\} \leq H_1(Y;\C),
\]
where $\tilde{s}$, called the \emph{preferred elevation of $s$}, is the lift of the smallest power of $s$ such that $\tilde{s}$ is a closed loop in $Y$, and $\left[ \tilde{s} \right]$ its homology class.
If $S$ is the set of primitive elements in $F_n$, i.e.\ loops in $X$ that are part of a basis of $\pi_1(X)$, we write $H_1^{\prim}(Y;\C)$ for $H_1^S(Y;\C)$, and call this subrepresentation the \textit{primitive homology} of $Y$, following Farb and Hensel \cite{FarbHensel_FiniteCoversOfGraphs}.
In this paper, the authors asked whether $H_1^{\prim}(Y;\C)=H_1(Y;\C)$.
This question was presumably first asked by March\'e for the homology of surfaces with $\Z$-coefficients, see \cite{Marche_HomologyGeneratedByLiftsOfSimpleCurves}, and later by Looijenga in \cite{Looijenga_SomeAlgGeomRelatedToMCG}.
Partial results were obtained by Farb and Hensel in \cite{FarbHensel_FiniteCoversOfGraphs}, where they showed that in fact equality holds for finite Abelian and $2$-step nilpotent groups, the latter only when $n \geq 3$; see \cite[Propositions 3.2 and 3.3]{FarbHensel_FiniteCoversOfGraphs}.
Malestein and Putman gave a complete answer to this question in \cite[Theorem C, Example 1.3]{MalesteinPutman_SCCFiniteCoversOfSurfaces}.
For every $n \geq 2$ they constructed a finite group which answers the question in the negative.

A central role is played by the theorem of Gasch\"utz, see \cite{Chevalley_VerhaltenIntegrale} or \cite[Theorem 2.1]{GrunewaldLarsenLubotzkyMalestein_ArithmeticQuotientsOfMCG}, which identifies the first homology of $Y$ as a representation of $G$; namely, we have an isomorphism of $G$-representations 
\[ H_1(Y;\C) \cong \C_{\triv} \oplus \C[G]^{\oplus (n-1)}, \]
where we denote by $\C_{\triv}$ the trivial one-dimensional representation of $G$ and by $ \C[G]$ the regular representation of $G$.

To obstruct primitive homology from being all of homology, Farb and Hensel used the theorem of Gasch\"utz together with the following representa-tion-theoretic property:
If $l$ is a primitive loop in $X$, then the $G$-orbit of the homology class of its preferred elevation is linearly independent, in other words 
\[\Span_{\C [G]} \{[\tilde{l]}\} \cong \Ind^G_{\langle \phi(l) \rangle} (\C_{\triv}), \]
where $\phi \from \pi_1(X) \to G$ is the epimorphism associated to the cover $p \from Y \to X$, compare \cite[Proposition 2.1]{FarbHensel_FiniteCoversOfGraphs}.
From this we obtain a necessary representation-theoretic condition for a homology class to be the homology class of a lift of a primitive element.
Recall that a homology class in $H_1(X;\Z)$ can be represented by a primitive loop in $X$ if and only if it is indivisible.
The question arises whether there is a similar characterization for homology classes of lifts of primitive elements.
Our first result shows that the necessary representation-theoretic property of Farb and Hensel is never sufficient for regular, Abelian covers.

\begin{theor} \label{Propo2.1_Inverse}
Let $G$ be the deck group of a finite, regular cover $Y \to X$ of finite graphs with rank of $\pi_1(X)$ at least two.
If $G$ is non-trivial Abelian, then there exists $z \in H_1(Y;\mathbb{Z})$ indivisible such that
\begin{enumerate}[label=(\roman*)]
\item \label{item1_Propo2.1_Inverse} $\Span_{\C [G]} \{z\} \cong \Ind^G_{\langle g \rangle} (\C_{\triv})$ for some $g \in G$, and
\item \label{item2_Propo2.1_Inverse} $z$ is not represented by an elevation of a primitive loop in $X$.
\end{enumerate}
\end{theor}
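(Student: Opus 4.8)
The plan is to produce $z$ whose $\C[G]$-span is the regular representation $\C[G]=\Ind^G_{\langle e\rangle}(\C_{\triv})$ — which yields \ref{item1_Propo2.1_Inverse} with $g=e$ — while at the same time $p_*(z)$ is divisible in $H_1(X;\Z)$ by some prime $r$; as I explain first, the latter already forces \ref{item2_Propo2.1_Inverse}. Let $\phi\from\pi_1(X)\to G$ be the epimorphism of the cover. If $l$ is a primitive loop, then the class of any elevation of $l$ lies in the $G$-orbit of $[\tilde l]$, hence spans $\Span_{\C[G]}\{[\tilde l]\}\cong\Ind^G_{\langle\phi(l)\rangle}(\C_{\triv})$ by the cited property of Farb and Hensel, and is carried by $p_*$ to $p_*[\tilde l]=\mathrm{ord}(\phi(l))\cdot[l]$; here $[l]\in H_1(X;\Z)$ is indivisible because $l$ belongs to a basis of $\pi_1(X)$. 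So if such a $z$ were an elevation of a primitive loop $l$, comparing $\dim\C[G]=|G|$ with $\dim\Ind^G_{\langle\phi(l)\rangle}(\C_{\triv})=[G:\langle\phi(l)\rangle]$ would give $\phi(l)=e$, whence $p_*(z)=[l]$ would be indivisible — contradicting divisibility by $r\ge2$.

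Next I would extract what I need from Gasch\"utz's isomorphism. Writing $z=\sum_\chi z_\chi$ for the isotypic decomposition over the characters $\chi$ of $G$, we have $\Span_{\C[G]}\{z\}\cong\bigoplus_{z_\chi\ne0}\C_\chi$, so $\Span_{\C[G]}\{z\}\cong\C[G]$ precisely when $z_\chi\ne0$ for all $\chi$. Gasch\"utz gives that the $\triv$-isotypic part of $H_1(Y;\C)$ has dimension $n$ and each non-trivial isotypic part has dimension $n-1\ge1$; and since $p_*$ is $G$-equivariant with trivial target and surjective over $\Q$ (by the transfer, $p_*\circ\mathrm{tr}=|G|\cdot\id$), it factors through an isomorphism $H_1(Y;\C)_G\xrightarrow{\ \sim\ }H_1(X;\C)$, so $\ker\big(p_*\colon H_1(Y;\C)\to H_1(X;\C)\big)$ is exactly the sum of the non-trivial isotypic parts, of dimension $(n-1)(|G|-1)\ge1$. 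Two consequences: $z_{\triv}\ne0$ iff $p_*(z)\ne0$; and $K:=\ker(p_*)\cap H_1(Y;\Z)$ has rank $\ge1$, while $H_1(Y;\Z)/K\cong\im(p_*)$ is torsion-free, so $K$ is a direct summand of $H_1(Y;\Z)$ and hence contains a vector that is indivisible in $H_1(Y;\Z)$.

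Now the construction. Fix any prime $r$ and put $\Lambda:=p_*^{-1}\big(r\cdot H_1(X;\Z)\big)\le H_1(Y;\Z)$, a finite-index subgroup. It contains $K$, hence a vector indivisible in $H_1(Y;\Z)$, so $\Lambda\not\subseteq s\cdot H_1(Y;\Z)$ for any prime $s$; and it contains $r\cdot H_1(Y;\Z)$, which is not contained in $\ker(p_*)$. I then use the elementary fact that a finite-index subgroup $\Lambda$ of $\Z^N$ which is not contained in any $s\Z^N$ contains, for any finite family $W_1,\dots,W_m$ of proper $\Q$-subspaces of $\Q^N$, a vector that is indivisible in $\Z^N$ and avoids all the $W_i$ (choose an indivisible $v_1\in\Lambda$, a generic $v_2\in\Lambda\setminus(\Q v_1\cup\bigcup_i W_i)$, and then $t\in\Z$ by the Chinese Remainder Theorem so that $v_1+tv_2$ is indivisible and misses each $W_i$ — only finitely many primes can ever divide $v_1+tv_2$, each barring one residue of $t$, and each $W_i$ bars only finitely many $t$). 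Applying this with the subspaces $\ker(p_*)$ and $\{w:w_\chi=0\}$ for the finitely many non-trivial $\chi$ produces $z\in\Lambda$, indivisible in $H_1(Y;\Z)$, with $p_*(z)\ne0$ and $z_\chi\ne0$ for every non-trivial $\chi$. Then $z_{\triv}\ne0$ as well, so $\Span_{\C[G]}\{z\}\cong\C[G]$, while $p_*(z)\in r\cdot H_1(X;\Z)$; by the first paragraph, $z$ satisfies \ref{item1_Propo2.1_Inverse} and \ref{item2_Propo2.1_Inverse}.

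The main obstacle, I expect, is the last step: keeping $z$ indivisible in $H_1(Y;\Z)$ while simultaneously forcing $p_*(z)$ to be divisible by $r$ and forcing every isotypic component of $z$ to be nonzero. Disentangling these constraints — divisibility handled by working inside $\Lambda$, content one by locating an indivisible vector in the summand $K\subseteq\Lambda$, and the genericity by the Chinese Remainder argument — is the technical heart; the representation theory (Gasch\"utz together with the Farb--Hensel property) only serves to convert "$z$ is an elevation of a primitive loop" into the divisibility statement for $p_*(z)$ that the construction violates.
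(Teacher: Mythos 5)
Your argument is correct, but it takes a genuinely different route from the paper's. The paper chooses two primitive loops $l',l''$ with $\langle\phi(l')\rangle\cap\langle\phi(l'')\rangle=\{1\}$ and $\langle\phi(l')\rangle\neq\{1\}$ (possible by the invariant factor decomposition, since $G$ is non-trivial Abelian and $n\geq 2$), and perturbs the elevation class $z'=[\tilde{l'}]$ only in its trivial isotypic component, replacing $z_1'$ by $z_1''$; the resulting $z$ has span $\Ind^G_{\langle\phi(l')\rangle}(\C_{\triv})$ with $\phi(l')\neq e$, and the contradiction with being an elevation class is extracted by showing, via $p_*$ and the fact that isomorphic induced representations of an Abelian group come from equal cyclic subgroups, that any primitive $l$ with $[\tilde{l}]=z$ would force $\langle\phi(l')\rangle=\langle\phi(l)\rangle\leq\langle\phi(l'')\rangle$. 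You instead aim at the degenerate case $g=e$: you force the span to be the full regular representation by a lattice genericity/Chinese Remainder argument that makes every isotypic component of $z$ non-zero while keeping $z$ indivisible and $p_*(z)$ a non-zero multiple of a fixed prime $r$; then a pure dimension count converts ``$z$ is an elevation class of a primitive $l$'' into ``$\phi(l)=e$, hence $p_*(z)=[l]$ is indivisible'', which your divisibility constraint rules out. Your route avoids both the two-loop claim and the character computation for induced representations, at the cost of the CRT lemma, whose sketch is sound: the primes that can divide $v_1+tv_2$ are exactly those dividing all $2\times 2$ minors of $(v_1\,|\,v_2)$ (finitely many, since $v_2\notin\Q v_1$), each excluding at most one residue of $t$, and each proper subspace excludes at most one value of $t$ because $v_2$ avoids it; note only that the non-trivial isotypic hyperplanes are a priori $\C$-subspaces, so one should intersect them with $\Q^N$ before invoking the genericity statement. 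One thing the paper's construction buys that yours does not immediately give is Corollary~\ref{Corollary2.1_Inverse} (no element of the whole span $\Span_{\C[G]}\{z\}$ is an elevation class), since your obstruction is attached to the particular vector $z$ rather than to the subgroup $\langle g\rangle$; conversely, your dimension-count obstruction does not use that $G$ is Abelian, which is suggestive for the question raised after Theorem~\ref{Propo2.1_Inverse}, though the genericity step as written still relies on the decomposition into one-dimensional characters.
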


We ask whether Theorem \ref{Propo2.1_Inverse} also holds for non-Abelian groups.


We extend some of the above questions to other subsets $S \subseteq F_n$.
We consider the subset of \emph{primitive commutators}, by which we mean a commutator of the form $[w, w']$ for $w, w'$ elements in a common basis of $F_n$.
Homology classes of elevations of primitive commutators satisfy the following representation-theoretic property, in analogy to \cite[Proposition 2.1]{FarbHensel_FiniteCoversOfGraphs} for primitive elements.

\begin{propo} \label{Propo_CommutatorHom}
Let $G$ be the deck group of a finite, regular cover $Y \to X$ of finite graphs with associated epimorphism $\phi \from \pi_1(X) \to G$.
Let $x_1$, $x_2 \in F_n$ be two primitive elements that extend to a free basis of $F_n$.
Set $x\coloneqq [x_1,x_2]$, and let $K \coloneqq \langle \phi(x) \rangle \leq \langle \phi(x_1), \phi(x_2) \rangle \eqqcolon H \leq G$ be the subgroups of $G$ generated by $\phi(x)$, respectively $\phi(x_1)$ and $\phi(x_2)$.
Then we have the following isomorphism of $G$-representations
\[ \Span_{\C [G]} \{ [\tilde{x}] \} \cong \Ind_K^G(\C_{\triv}) / \Ind_H^G(\C_{\triv}).\]
\end{propo}

Note that if $K \leq H\leq G$ are subgroups then $\Ind_H^G(\C_\triv)$ is a $G$-sub-representation of $\Ind_K^G(\C_\triv)$.
Indeed, by transitivity of induction it suffices to realize that $\C_\triv$ is an $H$-subrepresentation of $\Ind_K^H(\C_\triv)$ using for example Frobenius reciprocity,  compare \cite[Chapters 7.1-7.2]{Serre_LinearRepresentationsOfFiniteGroups}.

Our next result shows that this representation-theoretic property is not sufficient in the case $n=2$.

\begin{propo} \label{Propo_InversePropo1.5}
For $n=2$ there exists a finite, regular cover $p \from Y \to X$ and $z \in H_1(Y;\Z) \cap \ker(p_*)$ indivisible,  where $p_*$ is the induced map on the associated homology groups, such that
\begin{enumerate}[label=(\roman*)]
\item $\Span_{\C [G]} \{z\} \cong \Ind_{\langle g \rangle}^G(\C_\triv)/\C_\triv $ for some $g \in G$, where $G$ is the deck group associated to the cover $p$, and
\item $z$ cannot be represented by an elevation of a primitive commutator in $X$.
\end{enumerate}
\end{propo}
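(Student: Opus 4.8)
The plan is to take $X$ a wedge of two circles, so that $\pi_1(X)=F_2=\langle x_1,x_2\rangle$; write $c:=[x_1,x_2]$ with preferred elevation $\tilde c$, and let $\phi\colon F_2\to G$ be the epimorphism of the cover. The first task is to identify \emph{all} homology classes of $Y$ that are carried by elevations of primitive commutators. For $n=2$ a primitive commutator is $[w,w']$ with $\{w,w'\}$ a basis of $F_2$, so $[w,w']=\psi(c)$ for the $\psi\in\Aut(F_2)$ sending $(x_1,x_2)\mapsto(w,w')$; since every automorphism of $F_2$ carries $c$ to a conjugate of $c^{\pm1}$ (check this on the Nielsen generators of $\Aut(F_2)$), every primitive commutator is conjugate to $c^{\pm1}$. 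Writing such a commutator as $uc^{\pm1}u^{-1}$ and lifting $(uc^{\pm1}u^{-1})^k$, where $k$ is the order of $\phi(c)$, one checks that every elevation of it has homology class $\pm\phi(u)\cdot[\tilde c]$, and conversely every class $\pm\gamma\cdot[\tilde c]$ with $\gamma\in G$ is realized because $\phi$ is onto. Hence the set $\mathcal E$ of classes represented by elevations of primitive commutators is the \emph{finite} set $\{\pm\gamma\cdot[\tilde c]:\gamma\in G\}$, and $H_1^{\comm}(Y;\C)=\Span_{\C[G]}\{[\tilde c]\}$; by Proposition~\ref{Propo_CommutatorHom} (with $H=G$, since $x_1,x_2$ generate) this equals $\Ind_{\langle\phi(c)\rangle}^G(\C_\triv)/\C_\triv$ up to isomorphism.

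Next I would make everything explicit over $G=\Z/m$, via $\phi(x_1)=1$, $\phi(x_2)=0$. Then $Y$ is an $m$-gon whose edges are the lifts of $x_1$, with one loop $\beta_i$ (a lift of $x_2$) attached at each vertex $v_i$, $i\in\Z/m$; the class $A$ of the $m$-gon loop, together with $\beta_0,\dots,\beta_{m-1}$, forms a $\Z$-basis of $H_1(Y;\Z)$, with $p_*A$ equal to $m$ times one of the two basis vectors of $H_1(X;\Z)\cong\Z^2$ and $p_*\beta_i$ equal to the other. Thus $\ker(p_*)\cap H_1(Y;\Z)=\{\sum_ic_i\beta_i:\sum_ic_i=0\}$ is the integral augmentation ideal of $\Z[\Z/m]$, and a direct lift computation gives $[\tilde c]=\beta_1-\beta_0$, a $\Z[G]$-module generator of it. As $\phi(c)=0$, the previous paragraph gives $H_1^{\comm}(Y;\C)=\ker(p_*)\otimes\C\cong\C[G]/\C_\triv\cong\Ind_{\langle 1\rangle}^G(\C_\triv)/\C_\triv$.

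Finally I would set $z:=2[\tilde c]=2(\beta_1-\beta_0)$ (or, if an indivisible class is wanted, take $m$ odd and $\geq 5$ and $z:=[\tilde c]+g\cdot[\tilde c]=\beta_2-\beta_0$ with $g$ a generator of $G$). In either case $z\in\ker(p_*)\cap H_1(Y;\Z)$ and $\Span_{\C[G]}\{z\}=\ker(p_*)\otimes\C\cong\Ind_{\langle 1\rangle}^G(\C_\triv)/\C_\triv$: for $z=2[\tilde c]$ this is because rescaling does not change the $\C[G]$-span, and for $z=\beta_2-\beta_0$ because its nontrivial Fourier coefficients $\omega^{2k}-1$ ($\omega=e^{2\pi i/m}$, $k\not\equiv 0$) are all nonzero when $m$ is odd. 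So (i) holds with $g=1$. For (ii), every element of $\mathcal E=\{\pm(\beta_{i+1}-\beta_i):i\in\Z/m\}$ has all its $\beta$-coordinates in $\{0,\pm1\}$ and supported on two cyclically adjacent indices, whereas $z$ has neither property ($2[\tilde c]$ has a coordinate $\pm2$; $\beta_2-\beta_0$ is supported on the non-adjacent pair $\{0,2\}$ once $m\geq 5$), so $z\notin\mathcal E$, which by the first paragraph means exactly that $z$ is not represented by any elevation of a primitive commutator.

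The main obstacle is the first step: one must be confident that the finite set $\mathcal E$ genuinely exhausts \emph{all} elevations of \emph{all} primitive commutators, which relies on the Nielsen fact about $\Aut(F_2)$ together with a careful analysis of how conjugating an element of $F_2$ translates the elevations of the corresponding loop. Once $\mathcal E$ is seen to be finite, satisfying (i) while violating (ii) is easy, exactly because the invariant recorded in (i) cannot distinguish $[\tilde c]$ from $2[\tilde c]$ (or from $\beta_2-\beta_0$).
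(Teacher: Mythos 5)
Your argument is correct, but it proves Proposition \ref{Propo_InversePropo1.5} by a genuinely different route than the paper. The paper builds $Y$ as an iterated $\modd\,2$-homology cover: the primitive commutators $[x_1,x_2]$ and $[x_2^{-1},x_1]$ elevate to primitive elements of the intermediate cover $Z$ lying in a common basis of $\pi_1(Z)\cong F_5$ (Lemma \ref{Lemma_mod2HomCover}), and the homogeneous-component mixing trick from the proof of Theorem \ref{Propo2.1_Inverse} is then rerun one level up to produce an indivisible $z$ that is not the class of an elevation of a primitive element of $Z$, hence not of a primitive commutator of $X$. You instead work in a single cyclic cover with $G=\Z/m$, $m$ odd and at least $5$, and exploit a phenomenon special to rank two: by Nielsen's theorem every automorphism of $F_2$ sends $[x_1,x_2]$ to a conjugate of $[x_1,x_2]^{\pm1}$, so all primitive commutators are conjugate to $c^{\pm1}$, and the homology classes of all their elevations form the finite, explicitly computable set $\{\pm\gamma\cdot[\tilde c]:\gamma\in G\}=\{\pm(\beta_{i+1}-\beta_i)\}$; the class $z=\beta_2-\beta_0$ visibly avoids this set while still generating the augmentation ideal over $\C[G]$ (your Fourier-coefficient check, which is exactly where $m$ odd is needed). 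Your computation that an elevation of $uc^{\pm1}u^{-1}$ has class $\pm\gamma\phi(u)[\tilde c]$ is right: the lift of $u^{-1}$ at $\phi(u)y_0$ is the reverse of the lift of $u$ at $y_0$, so the conjugating arcs cancel in homology. The payoff of your route is a much smaller and completely explicit cover; the costs are, first, the dependence on the rank-two Nielsen fact, which is classical but is the load-bearing step and must be cited or proved, and which has no analogue in higher rank (whereas the paper's iterated-cover technique is reused in the proof of Theorem \ref{Thm_H1CommNeqKer}), and second, that you should present the indivisible candidate $\beta_2-\beta_0$ rather than $2[\tilde c]$ as the final answer, since a divisible class satisfies the letter of the statement only vacuously (rescaling never changes a $\C[G]$-span) and the paper's construction deliberately produces an indivisible $z$.
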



We turn to the space $H_1^S(Y;\C)$ for $S$ the subset of all primitive commutators over all bases of $F_n$ (not just some fixed one), and write $H_1^\comm(Y;\C) \leq H_1(Y;\C)$ for this subrepresentation, the \emph{primitive commutator homology} of $Y$.
It is clear that $H_1^\comm(Y;\C) \leq \ker(p_*) \neq H_1(Y;\C)$.
One can ask whether $H_1^\comm(Y;\C) = \ker(p_*)$ holds.

To obstruct primitive commutator homology to constitute all of $\ker(p_*)$, we introduce the following notation following \cite[Definition 1.3]{FarbHensel_FiniteCoversOfGraphs}.
Let $\Irr(G)$ be the set of representatives of the pairwise non-isomorphic irreducible $G$-representations, and for $S \subseteq F_n$, let 
\[\Irr^S(\phi,G) \subseteq \Irr(G) \]
be the subset of those irreducible representations $V$ of $G$ that have the property that there is an element in $S$ whose image has a non-zero fixed point.
More precisely, $V \in \Irr^S(\phi,G)$ if and only if there exists an element $s \in S$ and $0 \neq v \in V$ such that $\phi(s)(v)=v$.
If $S$ is the set of primitive elements or primitive commutators, we write $\Irr^\prim(\phi,G)$ or $\Irr^\comm(\phi,G)$, respectively.
For a $G$-representation $M$ and $V \in \Irr(G)$,  we write $M(V)$ for the sum of all subrepresentations of $M$ isomorphic to $V$, and call them the homogeneous components of $M$.

\begin{theor} \label{Theorem_RepresentationTheoreticObstructionCommutatorHom}
Let $G$ be the deck group of a finite, regular cover $p \from Y \to X$ of finite graphs with associated epimorphism $\phi \from \pi_1(X) \to G$.
Then
\[ H_1^\comm(Y;\C) \leq \bigoplus_{V \in \Irr^{\comm}(\phi,G) \setminus \{\C_\triv\}} M(V), \]
for $M=H_1(Y;\C)$.
\end{theor}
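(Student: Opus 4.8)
The plan is to deduce the inclusion from Proposition~\ref{Propo_CommutatorHom}, the theorem of Gasch\"utz, and Frobenius reciprocity, with no further topological input; it amounts to an isotypic-component-by-isotypic-component comparison. First I would note that, by definition, $H_1^\comm(Y;\C)$ is the $\C[G]$-span of the set of all classes $[\widetilde{[w,w']}]$ with $[w,w']$ a primitive commutator, and that the $\C[G]$-span of a union of vectors equals the sum of the $\C[G]$-spans of the individual vectors. Hence
\[
H_1^\comm(Y;\C) \;=\; \sum_{[w,w']} \Span_{\C[G]}\bigl\{\,[\widetilde{[w,w']}]\,\bigr\},
\]
the sum ranging over all primitive commutators. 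Every such $[w,w']$ has the form $[x_i,x_j]$ for some basis $x_1,\dots,x_n$ of $F_n$ (choose a basis containing $w$ and $w'$ and relabel), so Proposition~\ref{Propo_CommutatorHom} identifies each summand with $\Ind_K^G(\C_\triv)/\Ind_H^G(\C_\triv)$, where $K=\langle\phi([w,w'])\rangle \leq H=\langle\phi(w),\phi(w')\rangle \leq G$.

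Next I would determine which irreducibles can occur in such a summand. Since $K\leq H$, the natural surjection $\C[G/K]\twoheadrightarrow\C[G/H]$ of permutation modules exhibits $\Ind_H^G(\C_\triv)$ as a direct summand of $\Ind_K^G(\C_\triv)$ over the semisimple algebra $\C[G]$; therefore, for $V\in\Irr(G)$, the multiplicity of $V$ in the quotient $\Ind_K^G(\C_\triv)/\Ind_H^G(\C_\triv)$ equals $\dim V^K-\dim V^H\geq 0$ by Frobenius reciprocity, where $V^L$ denotes the subspace of $L$-fixed vectors. If this multiplicity is positive, then $V^K\neq 0$; and since $K$ is generated by the single element $\phi([w,w'])$, we have $V^K=V^{\phi([w,w'])}$, so $\phi([w,w'])$ fixes a nonzero vector of $V$. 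As $[w,w']$ is a primitive commutator, this is exactly the statement that $V\in\Irr^\comm(\phi,G)$. Consequently every irreducible constituent of $H_1^\comm(Y;\C)$ belongs to $\Irr^\comm(\phi,G)$.

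It remains to bound the multiplicities and to rule out $\C_\triv$, and here I would use that $H_1^\comm(Y;\C)\leq\ker(p_*)$ together with Gasch\"utz's isomorphism $H_1(Y;\C)\cong\C_\triv\oplus\C[G]^{\oplus(n-1)}$. Since $p_*$ is $G$-equivariant with trivial action on the target and is surjective over $\C$ by the transfer argument, a short dimension count shows $\ker(p_*)\cong\bigoplus_{V\in\Irr(G)\setminus\{\C_\triv\}}V^{\oplus(n-1)\dim V}$. In particular $\C_\triv$ does not occur in $\ker(p_*)$, hence not in $H_1^\comm(Y;\C)$, and every $V\neq\C_\triv$ occurs in $H_1^\comm(Y;\C)$ with multiplicity at most $(n-1)\dim V$; combining this with the previous paragraph gives exactly the asserted inclusion. (For $n\leq 1$ there are no nontrivial primitive commutators and both sides vanish.) I do not expect a genuine obstacle once Proposition~\ref{Propo_CommutatorHom} is available; the only points requiring a little care are the identification of $H_1^\comm(Y;\C)$ with the sum of the single-generator spans, and the equality $V^K=V^{\phi([w,w'])}$, which is what matches the Frobenius-reciprocity count to the defining condition for membership in $\Irr^\comm(\phi,G)$.
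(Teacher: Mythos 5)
Your proposal is correct and follows essentially the same route as the paper: Proposition~\ref{Propo_CommutatorHom} combined with Frobenius reciprocity shows that any irreducible constituent $V$ of a single span $\Span_{\C[G]}\{[\widetilde{[w,w']}]\}$ must satisfy $V^{K}\neq 0$ for $K=\langle\phi([w,w'])\rangle$, hence $V\in\Irr^{\comm}(\phi,G)$, while the containment $H_1^{\comm}(Y;\C)\leq\ker(p_*)$ together with Gasch\"utz's theorem excludes $\C_\triv$ and gives the multiplicity bound $(n-1)\dim(V)$. The only cosmetic difference is that you compute the exact multiplicity $\dim V^{K}-\dim V^{H}$ in the quotient, where the paper settles for the upper bound $\dim V^{K}$.
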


The argument follows closely the one of \cite[Theorem 1.4]{FarbHensel_FiniteCoversOfGraphs}.
The main ingredient of the proof of this theorem is Proposition \ref{Propo_CommutatorHom}.
The theorem implies that whenever a group $G$ together with an epimorphism $\phi \from F_n \to G$ satisfies $\Irr^{\comm}(\phi,G) \neq \Irr(G)$, we conclude by the theorem of Gasch\"utz that $H_1^{\comm}(Y;\C) \neq \ker(p_*)$.
We will give an example of a group with this property in the case $n=2$.
In fact, even more is true.

\begin{theor} \label{Thm_H1CommNeqKer}
For every $n \geq 2$ there exists a finite, regular cover $p \from Z \to X$ such that $H_1^{\comm}(Z;\C) \neq \ker(p_*)$.
\end{theor}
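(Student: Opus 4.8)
The plan is to translate Theorem \ref{Thm_H1CommNeqKer} into a purely representation-theoretic statement and then construct, for every $n$, a group realizing it. First I would record the consequence of Gasch\"utz's theorem needed here: since $H_1(Z;\C) \cong \C_\triv \oplus \C[G]^{\oplus(n-1)}$ and the $n$ copies of $\C_\triv$ it contains are carried isomorphically onto $H_1(X;\C)$ by $p_*$ (a transfer argument), the remaining non-trivial isotypic components give $\ker(p_*) \cong \bigoplus_{V \in \Irr(G)\setminus\{\C_\triv\}} V^{\oplus(n-1)\dim(V)}$. Comparing with Theorem \ref{Theorem_RepresentationTheoreticObstructionCommutatorHom}, it then suffices to produce, for each $n \geq 2$, a finite group $G$ together with an epimorphism $\phi\from F_n \to G$ such that $\Irr^\comm(\phi,G)\neq\Irr(G)$: the cover $Z\to X$ attached to $\phi$ works, because any $V\in\Irr(G)\setminus(\Irr^\comm(\phi,G)\cup\{\C_\triv\})$ appears in $\ker(p_*)$ but, by Theorem \ref{Theorem_RepresentationTheoreticObstructionCommutatorHom}, not in $H_1^\comm(Z;\C)$.

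To build such $(G,\phi)$, I would use that $V\notin\Irr^\comm(\phi,G)$ means precisely that $\phi(s)$ has no nonzero fixed vector in $V$ for every primitive commutator $s$, and that the primitive commutators are exactly the elements $\theta([x_1,x_2])$, $\theta\in\Aut(F_n)$. Hence the condition reads: $[a,b]$ acts on $V$ without nonzero fixed vectors for every pair $(a,b)$ arising as the first two entries of a generating $n$-tuple of $G$ in the Nielsen class of $(\phi(x_1),\dots,\phi(x_n))$. In particular $\ker(\phi)$ must contain no primitive commutator, hence no primitive element, so $(\phi(x_1),\dots,\phi(x_n))$ must not be Nielsen-equivalent to a tuple with a trivial entry; a convenient way to force this is to insist that the minimal number of generators $d(G^{\mathrm{ab}})$ of $G^{\mathrm{ab}}$ equals $n$.

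The case $n=2$ I would treat explicitly. Here the primitive commutators are the conjugates of $[x_1,x_2]^{\pm 1}$, so take $G$ nilpotent of class two, generated by $a=\phi(x_1)$ and $b=\phi(x_2)$, with $[a,b]$ generating a nontrivial cyclic centre — for instance the Heisenberg group over $\F_p$ or the dihedral group of order $8$ — and let $V$ be a faithful irreducible representation. By Schur's lemma $Z(G)$ acts on $V$ by scalars, so $[a,b]$ acts as multiplication by a nontrivial root of unity, with no nonzero fixed vector; the same holds for its conjugates and inverse, so $V\notin\Irr^\comm(\phi,G)$.

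The case $n\geq 3$ is the heart of the matter. Once $d(G^{\mathrm{ab}})=n$, reducing to $G^{\mathrm{ab}}\otimes\F_p$ shows the pairs $(a,b)$ above already range over all pairs of distinct members of a basis, hence over a huge set; this forces $G$ to be far from abelian. Class-two groups are excluded, because there $[a,b]$ acts on an irreducible $V$ through the single central character of $V$, whose kernel, being of codimension at most one, must meet the cone of decomposable bivectors in $\bigwedge^2(G^{\mathrm{ab}}\otimes\F_p)$ as soon as $n\geq 3$ (a Chevalley--Warning-type fact); and groups with nontrivial centre are excluded, since one can usually route a primitive element into the centre. I would therefore look for $G$ with $d(G^{\mathrm{ab}})=n$ and trivial centre together with a representation $V$ on which all relevant commutators act freely; a natural starting point is a (sub)product of affine groups $\F_{p_j}\rtimes(\Z/\ell)$ over distinct primes $p_j\equiv 1\bmod\ell$, with $V$ the external tensor product of their $\ell$-dimensional fixed-point-free representations, because then every nonzero element of $[G,G]=\prod_j\F_{p_j}$ acts on $V$ without nonzero fixed vectors. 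The remaining and hardest step is to choose $\phi$ so that it is onto and $\phi\theta(x_1)$, $\phi\theta(x_2)$ fail to commute for every $\theta\in\Aut(F_n)$: a naive product construction is not enough, since the translation parts of $\phi\theta(x_1)$ and $\phi\theta(x_2)$ can sometimes be adjusted so as to kill the commutator in the few coordinates where the rotation parts differ, so the factors (or the way $\phi$ distributes the generators among them) must be chosen rigidly enough to exclude this. Establishing that no primitive commutator lies in $\ker(\phi)$, uniformly in $n$, is where the real work of the proof lies.
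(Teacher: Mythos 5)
Your reduction is sound as far as it goes: by Gasch\"utz and Lemma \ref{Lemma_TechnicalCoverings1}, $\ker(p_*)\cong\bigoplus_{V\neq\C_\triv}V^{\oplus(n-1)\dim V}$, so producing $(G,\phi)$ with $\Irr^\comm(\phi,G)\neq\Irr(G)$ does force $H_1^\comm(Z;\C)\neq\ker(p_*)$ via Theorem \ref{Theorem_RepresentationTheoreticObstructionCommutatorHom}; and your $n=2$ construction (class-two $G$ with $[a,b]$ central and a faithful irreducible $V$, or equally well $S_3$) is correct, since in $F_2$ every primitive commutator is conjugate to $[x_1,x_2]^{\pm1}$. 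But for $n\geq 3$ you have not produced the group: you end by naming the construction of a suitable $(G,\phi)$ --- in particular arranging that $\phi\theta([x_1,x_2])$ acts without nonzero fixed vectors for \emph{every} $\theta\in\Aut(F_n)$ --- as ``where the real work of the proof lies,'' and that work is exactly what is missing. Note also that your target is strictly stronger than the theorem: $\Irr^\comm(\phi,G)\neq\Irr(G)$ is sufficient but not necessary for $H_1^\comm\neq\ker(p_*)$, since Theorem \ref{Theorem_RepresentationTheoreticObstructionCommutatorHom} only gives an upper bound on $H_1^\comm$. So the proposal, as written, proves the theorem only for $n=2$.

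The actual proof avoids this representation-theoretic existence problem entirely by an iterated-cover argument. One takes $p'\from Y\to X$ to be the $\modd\,2$-homology cover, for which Lemma \ref{Lemma_mod2HomCover} shows that every primitive commutator of $\pi_1(X)$ elevates to a \emph{primitive} element of $\pi_1(Y)$, and then stacks on top of it a (characteristic refinement of a) Malestein--Putman cover $q\from Z\to Y$ with $H_1^\prim(Z\to Y;\C)\neq H_1(Z;\C)$. If one had $H_1^\comm(Z\to X;\C)=\ker(p_*)$, then since $M(V_1)=p_\#(H_1(X;\C))$ is spanned by $p_\#$-images of classes of primitive loops and hence lies in $H_1^\prim(Z\to X;\C)$, one would get $H_1(Z;\C)=H_1^\prim(Z\to X;\C)+H_1^\comm(Z\to X;\C)\leq H_1^\prim(Z\to Y;\C)$, contradicting the choice of $q$. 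If you want to salvage your approach, you would either need to solve the $n\geq 3$ group-construction problem you flagged, or switch to an argument of this relative type that borrows the known failure of primitive homology from Malestein--Putman rather than building a new obstruction from scratch.
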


The proof of this theorem relies on the result of Malestein-Putman on primitive homology, see \cite[Theorem C, Example 1.3]{MalesteinPutman_SCCFiniteCoversOfSurfaces}.

\subsection*{Organization}
The paper is organized as follows.
Section \ref{Section_primitive} is solely concerned with homology classes of elevations of primitive elements.
We will introduce the important results, and finish by giving a proof of Theorem \ref{Propo2.1_Inverse}.
In Section \ref{Section_HomClassesElevPrimComm}, we will prove a necessary representation-theoretic property of homology classes of elevations of primitive commutators (Proposition \ref{Propo_CommutatorHom}), but which is not sufficient to characterize those homology classes, see Proposition \ref{Propo_InversePropo1.5}.
In Section \ref{Section_CommutatorHomology}, we study the primitive commutator homology, and we will prove Theorem \ref{Theorem_RepresentationTheoreticObstructionCommutatorHom}.
We will discuss how primitive homology and primitive commutator homology relate, which will be essential in the proof of Theorem \ref{Thm_H1CommNeqKer}.

\subsection*{Acknowledgements}
I would like to thank Sebastian Hensel for introducing me to this topic and for his helpful remarks.
Thanks goes as well to Bram Petri for valuable input, and Peter Feller, Gerhard Hiss and Anna Ribelles P\'erez for constructive discussions and feedback.
I am grateful to the referee for the detailed comments and suggestions, particularly for pointing out an error in the original argument for Corollary \ref{Corollary2.1_Inverse}.


\section{Homology Classes of Elevations of Primitive Elements} \label{Section_primitive}

Let $n \in \N$, $n \geq 2$, a finite group $G$ and a surjective homomorphism $\phi \from F_n \to G$, where $F_n = F \langle x_1, \ldots, x_n \rangle$ is the free group on $n$ generators.
Let $X$ be the wedge of $n$ copies of $S^1$ with vertex $x_0$.
Then we can associate to $\phi$ a finite regular path-connected cover $p \from Y \to X$ with base point $y_0 \in p^{-1}(x_0)$ and $p_*(\pi_1(Y,y_0))=\ker(\phi)$.
Under these assumptions, we have $G \cong F_n / \ker(\phi) \cong \pi_1(X,x_0)/p_*(\pi_1(Y,y_0))$ and $G$ acts on $Y$ by graph automorphisms.
This action extends to a linear action of $G$ on the finite-dimensional $\C$-vector space $H_1(Y;\C)$.

Recall that we call an element of a free (Abelian) group \emph{primitive} if it is part of a free basis.
An element of a free Abelian group is primitive if and only if it is \emph{indivisible}, that means it cannot be written as a non-trivial multiple of some other element.
Being indivisible is equivalent to the coefficient vector with respect to a basis having greatest common divisor one.
A primitive element in $F_n$ has a primitive image under the quotient map to $\Z^n$.
On the other hand, every primitive element in $\Z^n$ has a primitive preimage in $F_n$.

\begin{dfn}
For an element $l \in F_n$, let $k(l)$ be the minimal number such that $l^{k(l)} \in \ker(\phi)$.
The \textit{preferred elevation $\tilde{l}$ of $l$} is the lift of $l^{k(l)}$ to $Y$ at the base point $y_0$.
Lifts of $l^{k(l)}$ at other preimages of $x_0$ are just called \emph{elevations}.
Unless clear from context we write $\tilde{l}_p$, where $p \from Y \to X$ is the covering map, to specify which cover we are lifting to.
\end{dfn}

Note that $k(l)$ is finite for all $l \in F_n$, since $G$ is finite.
By the regularity of the cover, we obtain all elevations of $l$ by applying the elements of $G$ to the preferred elevation $\tilde{l}$.
We can view $\tilde{l}$ as an element in $\pi_1(Y,y_0)$, where it will also be called $\tilde{l}$.
Its homology class is denoted by square brackets $[\tilde{l}]$.
It is straightforward to verify that elevations of primitive elements are primitive.

For a cover $p \from Y \to X$ we denote by $p_*$ the induced morphisms on the level of fundamental groups and homology groups.
Since $Y$ is finite we have the \emph{transfer map} $p_{\#} \from C_\bullet(X) \to C_\bullet(Y), \; x \mapsto \sum_{\tilde{x} \textrm{ lift of } x} \tilde{x}.$
The map $p_{\#}$ commutes with the differential operators and thus induces a map on homology, which will also be denoted by $p_{\#}$.
We have $p_* \circ p_{\#} = |G| \cdot \id$, which in particular implies that $p_* \from H_1(Y;\C)\to H_1(X;\C)$ is surjective and $p_{\#} \from H_1(X;\C)\to H_1(Y;\C)$ is injective. 

For $l\in F_n$ we observe that $p_*([\tilde{l}]) = |  \phi (l)  | \cdot [ l ] \in H_1(X;\Z) $, since $k(l) = | \phi(l) |$.
If $G$ is Abelian, and $\tilde{\phi} \from \Z^n \to G$ denotes the homomorphism obtained from $\phi$ which factors through $\Z^n$ then
\begin{equation} \label{eqn_AbelianFactorThroughHomology}
\phi(l) = \tilde{\phi}\left([l]\right) = \tilde{\phi} \left( \frac{p_*([\tilde{l}])}{| \phi (l)  |} \right).
\end{equation}

We summarize the general setup of this article in the following diagram, which can be consulted at every point.
\[
\begin{tikzcd}
Y \arrow[two heads]{d}{p} & F_m=\pi_1(Y) \arrow[two heads]{r}{\Ab} \arrow[hook]{d}{p_*} & \Z^m=H_1(Y;\Z) \arrow{r}{-\otimes_\Z \C} \arrow{d}{p_*} & H_1(Y;\C)=\C^m \arrow[two heads]{d}{p_*}\\
X  & F_n=\pi_1(X) \arrow[two heads]{r}{\Ab} \arrow[two heads]{d}{\phi} & \Z^n=H_1(X;\Z) \arrow{r}{-\otimes_\Z \C} \arrow[shift left=2]{u}{p_{\#}} \arrow[two heads]{d}{\tilde{\phi}} & H_1(X;\C)=\C^n \arrow[shift left=2, hook]{u}{p_{\#}} \\
 & G \arrow[two heads]{r}{\Ab} & G^{\Ab}
\end{tikzcd}
\]
In the rest of this section we will show that the property given in \cite[Proposition 2.1]{FarbHensel_FiniteCoversOfGraphs} is not sufficient to characterize elevations of primitive elements.
We recall the statement here.

\begin{propo}[Farb-Hensel] \label{Propo2.1}
Let $G$ be the deck group of a finite, regular cover $p \from Y \to X$ of finite graphs with associated epimorphism $\phi \from \pi_1(X) \to G$.
Let $l$ be a primitive loop in $X$ and let $\tilde{l}$ be its preferred elevation in $Y$.
We set $z \coloneqq [\tilde{l}]$ and $g \coloneqq \phi(l)$.
Then there is an isomorphism of $G$-representations
\[ \Span_{\C [G]} \left\{ z \right\} \cong \Ind_{\langle g \rangle}^G(\C_\triv). \]
\end{propo}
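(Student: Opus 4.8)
The plan is to exhibit a $G$-equivariant isomorphism between $\Span_{\C[G]}\{z\}$ and the permutation module $\C[G/\langle g\rangle]$, which is precisely $\Ind_{\langle g\rangle}^G(\C_\triv)$. Since $l$ is primitive, there is a homotopy equivalence $f\from X'\to X$ of a wedge $X'$ of $n$ circles $c_1,\dots,c_n$ with $f_*(c_1)=l$ and $f_*(c_i)=x_i$; pulling $p$ back along $f$ produces a cover of $X'$ with the same deck group $G$, a $G$-equivariantly isomorphic $H_1$, and with the preferred elevation of $c_1$ corresponding to $\tilde l$. So I would first reduce to the case $l=x_1=:c_1$, an embedded loop in $X$.

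Next I would analyse the subgraph $p^{-1}(c_1)\subseteq Y$. As a covering of the circle $c_1$ it is a disjoint union of circles $C_1,\dots,C_r$, where $r=[G:\langle g\rangle]$, each covering $c_1$ with degree $|g|$; since the $G$-action on $Y$ is free and transitive on the fibre over a point of $c_1$, the group $G$ permutes $\{C_1,\dots,C_r\}$ transitively with $\Stab_G(C_1)=\langle g\rangle$, where $C_1$ is the circle through $y_0$. The preferred elevation $\tilde l$, being the lift of $l^{|g|}$ at $y_0$, runs exactly once around $C_1$, so $z=[\tilde l]$ equals the fundamental class $[C_1]$ for a suitable orientation, and $g$ acts on $C_1$ by an orientation-preserving rotation, whence $g\cdot z=z$. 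Consequently $h\langle g\rangle\mapsto h\cdot z=[hC_1]$ is a well-defined $G$-equivariant surjection $\C[G/\langle g\rangle]=\Ind_{\langle g\rangle}^G(\C_\triv)\twoheadrightarrow\Span_{\C[G]}\{z\}$, and the proposition reduces to showing that the classes $[C_1],\dots,[C_r]$ are linearly independent in $H_1(Y;\C)$.

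That linear independence is the only real content, and for it I would use crucially that $Y$ is one-dimensional: $H_1(Y;\C)$ is the space of $1$-cycles, so every cochain $\beta\in\Hom_\C(C_1(Y;\C),\C)$ restricts to a linear functional on $H_1(Y;\C)$, annihilating boundaries automatically for lack of $2$-cells. Picking one oriented edge $e_j$ in each circle $C_j$ and letting $\beta_j$ be its dual cochain, disjointness of the $C_j$ gives $\langle\beta_j,[C_k]\rangle=\delta_{jk}$, since $[C_k]$ is the sum of the edges of $C_k$; this forces the $[C_k]$ to be independent, and the surjection above is then an isomorphism of $G$-representations. The main obstacles are the bookkeeping in the reduction step and checking that $\tilde l$ genuinely represents $[C_1]$; the conceptual point worth flagging is that the independence argument relies on $Y$ being a graph (the analogous ``dual edge'' cochain on a surface is not closed), which is why Proposition \ref{Propo2.1} is special to graphs.
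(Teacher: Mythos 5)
The paper does not prove Proposition~\ref{Propo2.1} at all --- it only recalls the statement from \cite[Proposition 2.1]{FarbHensel_FiniteCoversOfGraphs} --- so there is no in-paper proof to compare against. Your argument is correct and complete, and is essentially the standard one from the cited source: reducing to an embedded petal via a homotopy equivalence realizing an automorphism of $F_n$, identifying $\Stab_G(C_1)=\langle g\rangle$ so that $z=[C_1]$ generates a quotient of $\Ind_{\langle g\rangle}^G(\C_\triv)$, and then getting linear independence of the $[C_j]$ from dual edge cochains, which works precisely because a graph has no $2$-cells.
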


Note that this proposition contains two statements.
Firstly, it says that the $G$-orbit of an elevation of a primitive element is $\C$-linearly independent.
Secondly, it tells us that the subgroup over which we induce is related to the primitive element $l$ we started with.
It is cyclic and generated by $\phi(l)$.
If $G$ is Abelian, this allows us to relate $z$ and $\phi(l)$,  see Equation (\ref{eqn_AbelianFactorThroughHomology}), --- a fact which will be exploited in the proof of Theorem \ref{Propo2.1_Inverse} to verify the insufficiency of this property.

Let $V_1=\C_\triv, \ldots, V_{k(G)}$ be representatives of the isomorphism classes of the irreducible $G$-representations, where $k(G)$ is the number of non-isomorphic irreducible $G$-representations.
If $M$ is a $G$-representation, we write $M(V_i)$ for the sum of all subrepresentations of $M$ isomorphic to $V_i$, and call them the homogeneous components of $M$.
Recall that if $M=\C[G]$, then $M(V_i) \cong V_i^{\oplus \dim(V_i)}$ for all $ 1 \leq i \leq k(G)$, compare e.g.\ \cite[Chapter 2]{Isaacs_CharacterTheoryOfFiniteGroups}.
The following lemma follows directly from the theorem of Gasch\"utz.

\begin{lem} \label{Lemma_TechnicalCoverings1}
Let $M=H_1(Y;\C)$.
In the above setting, the following properties hold:
\begin{enumerate}[label=(\alph*)]
\item \label{item2_technicalCoverings1} $p_{\#}(H_1(X;\C)) \cong M(V_1)$.
\item \label{item4_technicalCoverings1} $\ker(p_*) \cong \bigoplus_{i=2}^{k(G)} M(V_i)$.
\end{enumerate}
Together we obtain $H_1(Y;\C) = p_{\#}(H_1(X;\C)) \oplus \ker(p_*)$.
\end{lem}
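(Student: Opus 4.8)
The plan is to read both identities off the isotypic decomposition $H_1(Y;\C) = \bigoplus_{i=1}^{k(G)} M(V_i)$, using two structural facts. First, the deck group $G$ acts trivially on the base graph $X$ (since $X$ is recovered as the quotient of $Y$ by $G$), hence trivially on $H_1(X;\C)$, so that both $p_*$ and the transfer $p_\#$ become morphisms of $G$-representations, with $H_1(X;\C)$ carrying the trivial action. Second, by definition the homogeneous component $M(V_1)$ is the sum of all subrepresentations of $H_1(Y;\C)$ isomorphic to $\C_\triv$, and this is exactly the subspace $H_1(Y;\C)^G$ of $G$-fixed classes.

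For part \ref{item2_technicalCoverings1}, I would first establish the inclusion $p_\#(H_1(X;\C)) \subseteq H_1(Y;\C)^G = M(V_1)$: on the chain level, for $g \in G$ and a cell $x$ of $X$ one has $g \cdot p_\#(x) = \sum_{\tilde x \in p^{-1}(x)} g\tilde x = p_\#(x)$ because $G$ permutes each fiber, and this descends to homology. For the reverse inclusion I would invoke the theorem of Gasch\"utz, which gives $H_1(Y;\C) \cong \C_\triv \oplus \C[G]^{\oplus(n-1)}$ and hence $M(V_1) \cong \C_\triv \oplus \bigl(\C[G](V_1)\bigr)^{\oplus(n-1)} \cong \C_\triv^{\oplus n}$, using the recalled fact $\C[G](V_1) \cong V_1^{\oplus \dim V_1} = \C_\triv$. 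Thus $\dim M(V_1) = n = \dim H_1(X;\C) = \dim p_\#(H_1(X;\C))$, the last equality because $p_* \circ p_\# = |G|\cdot\id$ forces $p_\#$ to be injective on $H_1(X;\C)$; an inclusion of equidimensional spaces is an equality. (Alternatively, without the dimension count, one can use that on chains $p_\# \circ p_* = \sum_{g\in G} g$ by regularity of the cover, so that the image of $p_\#$ contains the image of the norm element, which is precisely $H_1(Y;\C)^G$.)

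For part \ref{item4_technicalCoverings1}, I would argue that $p_*$ annihilates every nontrivial homogeneous component: a $G$-equivariant map $M(V_i) \to H_1(X;\C)$ with $V_i \not\cong \C_\triv$ vanishes by Schur's lemma, so $\bigoplus_{i=2}^{k(G)} M(V_i) \subseteq \ker(p_*)$. Conversely, $p_*$ is injective on $M(V_1) = p_\#(H_1(X;\C))$ because $p_* \circ p_\# = |G|\cdot\id$, whence $\ker(p_*) \cap M(V_1) = 0$. Decomposing an arbitrary $v \in \ker(p_*)$ along $H_1(Y;\C) = M(V_1) \oplus \bigoplus_{i\geq 2} M(V_i)$ as $v = v_1 + w$ then gives $v_1 = v - w \in \ker(p_*) \cap M(V_1) = 0$, so $v = w \in \bigoplus_{i\geq 2} M(V_i)$; this yields part \ref{item4_technicalCoverings1}. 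Combining the two parts with the isotypic decomposition immediately gives the final assertion $H_1(Y;\C) = p_\#(H_1(X;\C)) \oplus \ker(p_*)$.

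The only genuinely substantive point is the reverse inclusion in part \ref{item2_technicalCoverings1}: showing that the transfer map surjects onto the fixed subspace, rather than merely landing inside it. This is exactly where the Gasch\"utz dimension count is used (or, equivalently, the chain-level identity $p_\# \circ p_* = \sum_{g\in G} g$). Everything else is a formal consequence of Schur's lemma and the relation $p_* \circ p_\# = |G|\cdot\id$.
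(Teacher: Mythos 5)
Your proof is correct and follows essentially the same route as the paper: the inclusion $p_\#(H_1(X;\C)) \subseteq M(V_1)$ from the fiber-permutation argument, equality via Gasch\"utz plus injectivity of $p_\#$ (from $p_* \circ p_\# = |G|\cdot\id$), and the identification of $\ker(p_*)$ with the nontrivial isotypic part by semisimplicity. Your part \ref{item4_technicalCoverings1} is organized slightly differently (Schur's lemma for the containment $\bigoplus_{i\geq 2} M(V_i) \subseteq \ker(p_*)$ plus $\ker(p_*)\cap M(V_1)=0$, rather than the paper's quotient-isomorphism and dimension count), but this is the same underlying argument.
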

\begin{proof}
\begin{enumerate}[label=(\alph*), leftmargin=*]
\item The group $G$ acts trivially on $p_\#(H_1(X;\C))$, since an element of the deck group permutes lifts.
Hence $p_{\#}(H_1(X;\C)) \leq M(V_1)$.
Since $p_\#$ is injective as a map on homology we obtain equality by the theorem of Gasch\"utz and dimensionality reasons.

\item This follows from the theorem of Gasch\"utz and the fact that the map $p_* \from H_1(Y;\C) \to H_1(X;\C)$ is surjective.
Indeed, we have
\[M(V_1) \cong \C^n \cong H_1(X;\C) = \im(p_*) \cong M/ \ker(p_*),\]
as $\C [G]$-modules, where $G$ acts trivially on $H_1(X;\C)$, so $\ker(p_*)$ does not have simple submodules isomorphic to $V_1$.
For dimensionality reasons we obtain $\ker(p_*) \cong \bigoplus_{i=2}^{k(G)} M(V_i)$.
\end{enumerate}
\noindent Let $x\in H_1(X;\C)$ with $p_\#(x) \in \ker(p_*)$.
Then $p_*(p_\#(x))= |G|x =0$, and hence $x=0$.
Thus $\ker(p_*) \cap \im( p_{\#}) = \{0\}$.
\end{proof}

Before we prove our first result we state the following lemma about induced representations.

\begin{lem} \label{Lemma_SpanInducedRepr}
Let $G$ be a finite group and $M$ a representation of $G$ over $\C$.
Assume there exists $0 \neq m \in M$ such that $Gm$ is linearly independent. 
Let $H \coloneqq \Stab_G(m)$. 
Then
\[\Span_{\C[G]}\{m\} \cong \Ind^G_H(\C_\triv).\]
\end{lem}
\begin{proof}
Choose representatives $g_1 = 1, g_2, \ldots, g_{[G:H]}$ for the left cosets of $H$ in $G$. 
For all $g \in G$, $g $ can be uniquely written as $g = g_i h$ for some
$1 \leq i \leq [G : H]$ and $h \in H$. 
We then have $gm = g_i h m = g_i m$, since $h \in \Stab_G(m)$. 
Also note that if $g_i m = g_j m$, then $g_j^{-1}g_i \in H$, so $g_i \in g_jH$ which implies $i = j$. 
Thus ${g_1 m, \ldots, g_{[G:H]}m}$ is a basis for $\Span_{\C[G]}\{m\}$.
By construction, $\Ind^G_H(\C_\triv) = U_1 \oplus \ldots \oplus U_{[G:H]} $ as $\C[H]$-modules with $U_1 = \C_\triv$ as $\C[H]$-module,  and $U_i = g_i U_1$. 
Let $u_1 \in U_1 \setminus \{0\}$. 
Then $U_1 = \langle u_1\rangle$ and thus $U_i = \langle g_iu_1 \rangle$ for all $1 \leq i \leq [G : H]$. 
Define $\varphi \from \Span_{\C[G]}\{m\} \to \Ind^G_H(\C_\triv)$, $g_i m \mapsto g_i u_1$, and $\C$-linear extension.
This is an isomorphism of $\C[G]$-modules.
\end{proof}

We are now ready to prove our first result.
\begin{proof}[Proof of Theorem~\ref{Propo2.1_Inverse}]
Recall that we would like to prove that if $G$ is the deck group of a finite, regular cover $Y \to X$ of finite graphs with rank of $\pi_1(X)$ at least two, and $G$ is non-trivial Abelian, then there exists an indivisible homology class $z \in H_1(Y;\mathbb{Z})$ such that
\begin{enumerate}[label=(\roman*)]
\item $\Span_{\C [G]} \{z\} \cong \Ind^G_{\langle g \rangle} (\C_{\triv})$ for some $g \in G$, and
\item $z$ is not represented by an elevation of a primitive loop in $X$.
\end{enumerate}
The proof has two parts: First, we construct an indivisible homology class $z \in H_1(Y;\Z)$ starting from two homology classes of elevations of primitive elements.
Second, we verify that $z$ indeed satisfies conditions \ref{item1_Propo2.1_Inverse} and \ref{item2_Propo2.1_Inverse}.

\begin{claim*}
\textnormal{
There are primitive loops $l', l''$ in $X$ with $\langle \phi(l') \rangle \cap \langle \phi(l'') \rangle = \{1\}$ and $\langle \phi(l') \rangle \neq \{1\}$. }
\end{claim*}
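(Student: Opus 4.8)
The plan is to reduce the claim to a statement about the abelianization of $\pi_1(X)$ and then diagonalize $\phi$ via the structure theorem for finite abelian groups. First I would record that, since $G$ is abelian, $\phi$ factors as the abelianization $F_n \to \Z^n$ followed by a surjective homomorphism $\tilde\phi \from \Z^n \to G$, so that $\phi(l) = \tilde\phi([l])$ for every $l \in F_n$, where $[l] \in \Z^n$ is the image of $l$. Together with the two facts recalled above --- that a primitive element of $F_n$ has indivisible image $[l]$ in $\Z^n$, and that every indivisible element of $\Z^n$ admits a primitive preimage in $F_n$ --- this reduces the claim to exhibiting indivisible vectors $v', v'' \in \Z^n$ with $\tilde\phi(v') \neq 1$ and $\langle \tilde\phi(v') \rangle \cap \langle \tilde\phi(v'') \rangle = \{1\}$; primitive lifts $l', l'' \in F_n$ of $v', v''$ are then the desired loops in $X$.

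Next I would apply Smith normal form to $\ker\tilde\phi \leq \Z^n$: there is a basis $f_1, \dots, f_n$ of $\Z^n$ and positive integers $d_1 \mid d_2 \mid \dots \mid d_n$ such that $d_1 f_1, \dots, d_n f_n$ is a basis of $\ker\tilde\phi$. Consequently $\tilde\phi$ induces an isomorphism $G \cong \bigoplus_{i=1}^{n} \Z/d_i\Z$ carrying each $f_i$ to a generator of the $i$-th summand, so that $\langle \tilde\phi(f_i) \rangle$ is precisely that $i$-th summand. Since $G$ is non-trivial and the $d_i$ are non-decreasing, $d_n \geq 2$, hence $\tilde\phi(f_n)$ has order $d_n > 1$; and since $n \geq 2$, the basis vectors $f_1$ and $f_n$ are distinct, hence both indivisible, and $\langle \tilde\phi(f_1) \rangle \cap \langle \tilde\phi(f_n) \rangle = \{1\}$ because distinct summands of a direct sum intersect trivially (this holds even if $d_1 = 1$, in which case the first summand is itself trivial). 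Setting $v' = f_n$, $v'' = f_1$ and lifting to primitive $l', l'' \in F_n$ finishes the argument.

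The one place calling for care --- and where the hypothesis $n \geq 2$ is genuinely used --- is the choice of $l''$. One cannot, in general, simply arrange $\phi(l'') = 1$, since $\ker\tilde\phi$ need not contain any indivisible vector of $\Z^n$ (for instance when $\tilde\phi$ is the reduction $\Z^n \to (\Z/2\Z)^n$, whose kernel is $2\Z^n$). The diagonalization above circumvents this: it allows $\phi(l'')$ to be non-trivial provided it generates a cyclic subgroup complementary to $\langle \phi(l') \rangle$, and the existence of a second basis vector $f_1 \neq f_n$ supplies exactly such an element. Apart from this point there is no real obstacle; the argument rests only on the standard facts already collected in this section.
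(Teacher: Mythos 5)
Your proof is correct and follows essentially the same route as the paper: factor $\phi$ through the abelianization, invoke the invariant factor (Smith normal form) decomposition for $\ker\tilde\phi\leq\Z^n$, and lift the two resulting basis vectors to primitive elements of $F_n$. You have simply spelled out the details that the paper dismisses as ``clear'' from the invariant factor decomposition, and your handling of the edge cases (e.g.\ $d_1=1$, and why one cannot just take $\phi(l'')=1$) is accurate.
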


We will now prove the claim.
Since $G$ is Abelian, the map $\phi$ factors through $\Z^n$, so we obtain a surjective homomorphism $\tilde{\phi} \from \Z^n \to G$ of Abelian groups.
By the invariant factor decomposition of finitely generated modules (refer e.g.\ to \cite[Chapter 2.9, Theorem 2]{Bosch_Algebra}) the statement is clear for $\tilde{\phi}$ because $G$ is non-trivial and $n\geq 2$.
Since the Abelianization $F_n \to \Z^n$ has the property that every primitive element in $\Z^n$ has a primitive preimage, the claim is proven.

Choose $l', l''$ primitive loops in $X$ that satisfy the above claim.
To simplify notation we set $g' \coloneqq \phi(l')$ and $g'' \coloneqq \phi(l'') \in G$.
We consider the homology classes $z'=[\tilde{l'}]$ and $z''=[\tilde{l''}] \in H_1(Y;\Z)$ of the preferred elevations $\tilde{l'}$ and $\tilde{l''}$ of the two primitive loops $l'$ and $l''$, respectively.
The elements $z'$ and $z''$ will be used to build our desired homology class $z$.
Write $z' = z_1' + z_2'+ \ldots + z_{|G|}'$ and $z'' = z_1'' + z_2''+ \ldots + z_{|G|}''$
for the decomposition into the homogeneous components, i.e.\ $z_i'$, $z_i'' \in M(V_i)$ for $i \leq 1 \leq |G|$.
By the theorem of Gasch\"utz, we have $z_1', z_1'' \in V_1^{\oplus n}$ and $z_i', z_i'' \in V_i^{\oplus (n-1)}$ for $2 \leq i \leq |G|$.

We claim that $z_1' \in H_1(Y;\Q)$.
Indeed, by Lemma \ref{Lemma_TechnicalCoverings1} \ref{item2_technicalCoverings1}, we know that there exists $x \in H_1(X;\C)$ such that $z_1' = p_\#(x)$, and $|G| \, x = p_* \circ p_\# (x) = p_* (z_1') = p_*(z') \in H_1(X;\Z)$, using that $z_2', \ldots, z_{|G|}' \in \ker(p_*)$, see \ref{item4_technicalCoverings1} in the same lemma.
Thus, $ |G| \, z_1' = p_\#( |G| \, x) \in H_1(Y;\Z).$
By the same argument $z_1'' \in H_1(Y;\Q)$.

Define $b \coloneqq z_1'' - z_1' \in V_1^{\oplus n}$ and set 
\[ \hat{z} \coloneqq z' + b \in H_1(Y; \Q). \]
Let $q \in \Q$ such that $z \coloneqq q \hat{z} \in H_1(Y;\Z)$ is indivisible.
This will be our candidate homology class.

We come to the second part of the proof.
First, we verify that \ref{item1_Propo2.1_Inverse} holds for $z$.
For this we need to show that the projection of $\hat{z}$ to $M(V_1)$, namely $z_1''$, is non-trivial.
Assuming this, it is a straight-forward computation that $\Stab_G(\hat{z}) = \Stab_G(z') = \langle g' \rangle$, and that the $G$-orbit of $\hat{z}$ is linearly independent over $\C$, since this is true for $z'$.
Now \ref{item1_Propo2.1_Inverse} follows from Lemma \ref{Lemma_SpanInducedRepr} with
\begin{equation} \label{z-Span}
\Span_{\C [G]} \{z\} = \Span_{\C [G]} \{\hat{z}\} \cong \Ind_{\langle g' \rangle}^G (\C_{\triv}).
\end{equation}
To show that $z_1'' \neq 0$ we will argue by contradiction.
In fact, if $z_1'' = 0$, Lemma \ref{Lemma_TechnicalCoverings1} \ref{item4_technicalCoverings1} implies that $0=p_*(z_1'')=p_*(z'')= |g''  | [l'']$, where the last equality holds since $z''$ is the homology class of the preferred elevation of $l''$.
Thus $[l'']=0$ which contradicts the fact that $l''$ is primitive.

To verify \ref{item2_Propo2.1_Inverse}, we clearly need that $b \neq 0$ because otherwise $z=z'$.
Assume for a contradiction that $b=0$, or equivalently $z_1'=z_1''$.
By Lemma \ref{Lemma_TechnicalCoverings1} \ref{item4_technicalCoverings1}, it follows that $p_*(z')=p_*(z'')$, and thus $|g'| [l'] =| g'' | [l''] \in H_1(X;\Z)$.
Since $[l']$ and $[l'']$ are indivisible integral homology classes,  they are equal.
As $G$ is Abelian,  we have $g' = \tilde{\phi}([l'])= \tilde{\phi}([l''])=g''$ by Equation (\ref{eqn_AbelianFactorThroughHomology}), which contradicts the choice of $l'$ and $l''$.

Knowing that $b\neq 0$, assume now towards a contradiction that $z$ is the homology class of an elevation of a primitive element $l$ in $X$.
By Proposition \ref{Propo2.1} we have $ \Span_{\C [G]}  \{z\} \cong \Ind_{\langle \phi(l) \rangle}^G ( \C_{\triv} ). $
On the other hand, by Equation (\ref{z-Span}), we have 
\[ \Ind_{\langle g \rangle}^G ( \C_{\triv} ) \cong \Ind_{\langle g' \rangle}^G ( \C_{\triv} ), \]
with $g \coloneqq \phi(l)$.
Now $G$ Abelian implies that $\langle g \rangle = \langle g' \rangle$, since representations of finite groups over $\C$ are determined by their characters and there is an explicit formula for the character of an induced representation, compare e.g.\ \cite[Chapter 5]{Isaacs_CharacterTheoryOfFiniteGroups}.
Further, we compute 
\[ | g | [l]= p_*(z) = q \, p_*(\hat{z}) = q \, p_*(z_1'') = q \, p_*(z'') = q |g'' |  [l''] \in H_1(X;\Z).\]
Again by the indivisibility of the integral homology classes $[l]$ and $[l'']$, we conclude that they are equal.
Since $G$ is Abelian, we obtain $g = g''$ again by Equation (\ref{eqn_AbelianFactorThroughHomology}).
This contradicts the fact that $\langle g' \rangle \cap \langle g'' \rangle = \{1\}$ and $\langle g' \rangle \neq \{1\}$.
\end{proof}

In fact, a slightly stronger result can be verified using the same techniques as in the proof of the above proposition.

\begin{corol} \label{Corollary2.1_Inverse}
Let $z \in H_1(Y;\Z)$ be the element constructed in the proof of Theorem \ref{Propo2.1_Inverse}.
Then there is no element in $\Span_{\C [G]} \{z\}$ which is the homology class of an elevation of a primitive loop in $X$.
\end{corol}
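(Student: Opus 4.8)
The plan is to argue by contradiction, extending the final part of the proof of Theorem~\ref{Propo2.1_Inverse}. Recall from that proof the primitive loops $l', l''$ in $X$ with $g' = \phi(l')$, $g'' = \phi(l'')$, $\langle g'\rangle\cap\langle g''\rangle=\{1\}$, $\langle g'\rangle\neq\{1\}$, and $z = q\hat{z}$ with $q\neq 0$ and $p_*(z) = q\,|g''|\,[l'']$. Suppose $w\in\Span_{\C[G]}\{z\}$ is the homology class of an elevation of a primitive loop $l$ in $X$. By Equation~(\ref{z-Span}) together with Proposition~\ref{Propo2.1}, the $G$-orbit of $z$ is a $\C$-basis of $\Span_{\C[G]}\{z\}$, so I may write $w = \sum_{h\in G} c_h\,(h\cdot z)$ with $c_h\in\C$.

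First I would compute $p_*(w)$ in two ways. Since $p_*\from H_1(Y;\C)\to H_1(X;\C)$ is $G$-equivariant for the trivial action on the target, $p_*(h\cdot z)=p_*(z)$ for every $h$, so $p_*(w) = \big(\sum_h c_h\big)\,p_*(z) = \big(\sum_h c_h\big)\,q\,|g''|\,[l'']$ is a multiple of $[l'']$. On the other hand, as $w$ is the class of an elevation of $l$, we have $p_*(w) = |\phi(l)|\,[l]$, which is a nonzero integral multiple of the indivisible class $[l]$ (nonzero since $[l]$ is indivisible and $|\phi(l)|\geq 1$). Comparing the two expressions and using that $[l]$ and $[l'']$ are both indivisible, I conclude $[l] = \pm[l'']$. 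Applying $\tilde{\phi}$ and using that $G$ is Abelian then gives $\phi(l) = \tilde{\phi}([l]) = \tilde{\phi}(\pm[l'']) = (g'')^{\pm1}$, hence $\langle\phi(l)\rangle = \langle g''\rangle$.

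Next I would bring in the representation theory. An elevation of $l$ is a $G$-translate of its preferred elevation, so $\Span_{\C[G]}\{w\}$ equals the cyclic span of the class of the preferred elevation of $l$; by Proposition~\ref{Propo2.1} this is $\cong\Ind_{\langle\phi(l)\rangle}^G(\C_\triv) = \Ind_{\langle g''\rangle}^G(\C_\triv)$. But $w\in\Span_{\C[G]}\{z\}$ forces $\Span_{\C[G]}\{w\}$ to be a $G$-subrepresentation of $\Span_{\C[G]}\{z\}\cong\Ind_{\langle g'\rangle}^G(\C_\triv)$. Since $G$ is Abelian, $\Ind_{\langle g'\rangle}^G(\C_\triv)$ is the multiplicity-free sum of the linear characters of $G$ trivial on $\langle g'\rangle$, and likewise for $\langle g''\rangle$; therefore the inclusion up to isomorphism forces every character trivial on $\langle g''\rangle$ to be trivial on $\langle g'\rangle$, i.e.\ $g'\in\bigcap\{\ker\chi : \chi|_{\langle g''\rangle}=1\} = \langle g''\rangle$. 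Thus $\langle g'\rangle\leq\langle g''\rangle$, contradicting $\langle g'\rangle\cap\langle g''\rangle=\{1\}$ and $\langle g'\rangle\neq\{1\}$.

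The step I expect to require the most care is the last one: a nonzero element of $\Span_{\C[G]}\{z\}$ need not generate the whole space as a $\C[G]$-module, so one cannot simply assert $\Span_{\C[G]}\{w\}=\Span_{\C[G]}\{z\}$. One has to use genuinely that over $\C$ and for Abelian $G$ these spaces are multiplicity-free, so that a subrepresentation is pinned down by, and equals, a sub-sum of the occurring characters, and then match this with the explicit character-theoretic description of induced representations of Abelian groups, as in~\cite[Chapter~5]{Isaacs_CharacterTheoryOfFiniteGroups}. The remaining computations are essentially a transcription of those in the proof of Theorem~\ref{Propo2.1_Inverse}.
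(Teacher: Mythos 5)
Your proof is correct and follows exactly the route the paper intends when it says the corollary ``can be verified using the same techniques'' as Theorem~\ref{Propo2.1_Inverse}: the $p_*$/$\tilde{\phi}$ computation pinning $\langle\phi(l)\rangle$ to $\langle g''\rangle$, combined with the character theory of induced representations of Abelian groups, where your multiplicity-freeness observation is precisely the extra ingredient needed to pass from an isomorphism of spans to a containment $\langle g'\rangle\leq\langle g''\rangle$ for a general element $w$ of $\Span_{\C[G]}\{z\}$. One cosmetic point: the family $(h\cdot z)_{h\in G}$ is a spanning set but not a basis of $\Span_{\C[G]}\{z\}$ (its distinct elements are indexed by $G/\Stab_G(z)$), though your argument only uses that $w$ is some linear combination of the $h\cdot z$, so nothing breaks.
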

\begin{proof}
Assume there exists $z^* \in \Span_{\C [G]} \{z\} \cap H_1(Y;\Z)$ indivisible with $[ \tilde{l^*} ] = z^*$ for $l^* \in F_n$ primitive.
Then $\Span_{\C [G]} \{z^*\} \cong \Ind_{\langle g^* \rangle}^G(\C_\triv)$ with $g^*\coloneqq \phi(l^*) \in G$ by Proposition \ref{Propo2.1}.
We also have there exist $\alpha_1, \ldots, \alpha_{|G|} \in \C$ such that $z^* = \sum_{i=1}^{|G|} \alpha_i g_i z$ for $g_i \in G$.
Thus 
\[p_*(z^*) = \sum_{i=1}^{|G|} \alpha_i p_*(g_i z) = \underbrace{ \left( \sum_{i=1}^{|G|} \alpha_i \right) }_{\coloneqq A} p_*(z), \]
since $G$ acts as homeomorphisms of the covering space which preserve the projection map $p$.
Then $A \in \Q$ since both $p_*(z^*)$,  $p_*(z)$ are in $H_1(X;\Z)$.
Putting everything together we obtain
\[ |g^*| [l^*] = p_*(z^*) = A q \, p_*(\hat{z}) = A q \, p_*(z'') = A q |g''| [ l''].\]
Since both $[l^*]$ and $[l'']$ are integral indivisible homology classes, they are equal, and thus $g^* = g''$ again by Equation (\ref{eqn_AbelianFactorThroughHomology}).
Recall that $\Stab_G(z)=\langle g' \rangle$.
We claim that $G$ Abelian implies that $\Stab_G(z) \leq \Stab_G(z^*)$ and $\Stab_G(z^*) = \langle g^* \rangle$.
Indeed for $g \in \Stab_G(z)$ we have
\[g z^* = \sum_{i=1}^{|G|} \alpha_i g g_i z = \sum_{i=1}^{|G|} \alpha_i g_i g z = \sum_{i=1}^{|G|} \alpha_i g_i z = z^*,\]
which proves the first part of the claim.
For the second part of the claim we observe that if $v_1,\ldots,v_{[G:\langle g^* \rangle]}$ is a $\C$-basis for $\Span_{\C [G]} \{z^*\} \cong \Ind_{\langle g^* \rangle}^G(\C_\triv)$ which gets permuted transitively by $G$ and for which $\Stab_G(v_1)=\langle g^* \rangle$, then $\Stab_G(v_k) = \langle g^* \rangle$ for all $k=2,\ldots, [G:\langle g^* \rangle]$, and hence $\Stab_G(z^*) \leq \langle g^* \rangle$.
By a dimension argument we obtain equality.
Thus we obtain that $\langle g' \rangle = \Stab_G(z) \leq \Stab_G(z^*) = \langle g^* \rangle = \langle g'' \rangle$,  which contradicts the choice of $l'$ and $l''$ in the proof of Theorem \ref{Propo2.1_Inverse}.
\end{proof}


\section{Homology Classes of Elevations of Primitive Commutators}
\label{Section_HomClassesElevPrimComm}

We begin this section by giving a proof of Proposition \ref{Propo_CommutatorHom}, which gives a necessary representation-theoretic property of homology classes of elevations of primitive commutators.

\begin{proof}[Proof of Proposition \ref{Propo_CommutatorHom}]
Recall that we would like to prove that if $x_1$ and $x_2$ extend to a free basis of $F_n$ and $x_{12} \coloneqq [x_1, x_2]$, then
\[ \Span_{\C [G]} \{ [\tilde{x}_{12}] \} \cong \Ind_K^G(\C_{\triv}) / \Ind_H^G(\C_{\triv}),\]
where $K \coloneqq \langle \phi(x_{12}) \rangle \leq \langle \phi(x_1), \phi(x_2) \rangle \eqqcolon H \leq G$.

Choose a base point preserving homotopy equivalence from $X$ to the wedge of $n$ circles that maps $x_1$ and $x_2$ to two of the circles (e.g.\ by identifying a maximal spanning tree to a point and then applying a base point preserving homotopy equivalence of the wedge of circles).
Then there exists a unique (up to homeomorphism) finite regular cover of the wedge of $n$ circles preserving base points such that the homotopy equivalence lifts to a homotopy equivalence of the corresponding covers and preserves base points.
Since homotopy equivalences induce isomorphisms on the level of fundamental and homology groups,  we can assume without loss of generality that $X$ is a wedge of $n$ circles and $x_1$, $x_2$ are two of those circles.

The proof now consists of two parts.
Firstly we reduce to the case $n=2$, where the statement simplifies to
\[ \Span_{\C [G]} \{ [\tilde{x}_{12}] \} \cong \Ind_K^G(\C_{\triv}) / \C_{\triv}, \]
since $H=G$.
Secondly we use surface topology to prove the claim in this special case.
We start by reducing to the case $n=2$.

Let $X_1$ be the union of the two circles $x_1$ and $x_2$.
Define $Y_1$ as the component of $p^{-1}(X_1)$ that contains the base point $y_0$.
Now choose representatives for the left cosets of $H$ in $G$, say $g_1=1, \ldots, g_{[G:H]}$, and set $Y_i \coloneqq g_i Y_1$ for $1 \leq i \leq [G:H]$.
These are pairwise disjoint subgraphs of $Y$, since the vertices of $Y$ correspond to the group elements and every group element is contained in exactly one $Y_i$.
It is easy to see that
$I \from \bigoplus_{i=1}^{[G:H]} H_1(Y_i;\C) \hookrightarrow H_1(Y;\C)$
is injective, where $I$ is induced by the inclusions of the subgraphs into $Y$; see e.g.\ \cite[Claim 2.4]{FarbHensel_MovingHomologyClasses}.
Note that $I$ is a morphism of $G$-representations, since the action of $G$ on the direct sum is given by permuting the summands according to the permutation of the subgraphs $Y_i$.
Now set $v \coloneqq [ \tilde{x}_{12} ]$.
By definition of $Y_1$, we have $v \in H_1(Y_1;\C)$ and, for every $h \in H$, we have $hv \in H_1(Y_1;\C)$.
For $g \in G$ write $g = g_j h$ for some $1\leq j \leq [G:H]$ and $h \in H$.
Thus $g v = g_j h v \in H_1(g_j Y_1; \C) = H_1(Y_j; \C)$.
We know that
$ \Span_{\C [G] } \{v\} = \sum_{i=1}^{[G:H]} g_i \, \Span_{\C [H]}\{v\}, $
and, since $I$ is injective, we conclude that this sum is in fact direct.
Hence
\[ \Span_{\C [G] } \{v\} \cong \Ind_H^G (\Span_{\C [H]}\{v\}).\]
Proposition \ref{Propo_CommutatorHom} in the case $n=2$ applies to $H$ since it is generated by two elements, and we obtain that $\Span_{\C [H]}\{v\} \cong \Ind_K^H(\C_\triv) / \C_\triv$.
By the exactness of induction (see e.g. \cite[Chapter 7.1]{Serre_LinearRepresentationsOfFiniteGroups}), it then follows that
\[ \Span_{\C [G] }\{v\} \cong \Ind_K^G(\C_\triv) /\Ind_H^G(\C_\triv). \]

Thus, without loss of generality, we can assume that $n=2$ and $G=H$.
To prove the proposition in this case, we use surface topology.
We would like to show that $ \Span_{\C [G]} \{[\tilde{x}_{12}] \} \cong \Ind_K^G(\C_{\triv}) / \C_{\triv} $.
We identify $F_2$ with the fundamental group of the torus with one boundary component $T$ and base point $t_0 \in \partial T$, with generators $x_1$ and $x_2$ represented by the simple closed curves as in the image below.
\begin{figure}[H]
\centering
\begin{tikzpicture}[thick,scale=1.1]
\pic[
  transform shape,
  name=a,
  tqft,rotate=90,
  cobordism edge/.style={draw},
  incoming boundary components=0,
  outgoing boundary components=2,
];
\pic[
  transform shape,
  name=b,
  tqft,rotate=90,
  cobordism edge/.style={draw},
  incoming lower boundary component 2/.style={draw,dashed,gray},
  outgoing lower boundary component 1/.style={draw, line width=1.2pt},
  outgoing upper boundary component 1/.style={draw,->-=.8, line width=1.2pt},
  incoming boundary components=2,
  outgoing boundary components=1,
  every outgoing boundary component/.style={draw},
  offset=.5,
  at=(a-outgoing boundary),
  anchor=incoming boundary,
];
\draw[dark-gray,->-=.5] (b-outgoing boundary 1.90) to[out=180,in=0,looseness=1.25] (a-outgoing boundary 1.center) to[out=180,in=180,looseness=1.25] (a-outgoing boundary 2.center) to[out=0,in=180,looseness=1.25] (b-outgoing boundary 1.90);
\draw[gray,->-=.5] (b-outgoing boundary 1.90) to[out=180,in=0,looseness=1.25] (b-incoming boundary 2.0);
\draw[gray,->-=.5] (b-incoming boundary 2.180) to[out=0,in=180,looseness=1.25] (b-outgoing boundary 1.90);
\node (node0) at (b-outgoing boundary 1.90) [circle,fill,scale=0.4] {};
\node [below left=0.001cm of node0,scale=0.8] {$t_0$};
\node at (2,-0.2) {$x_1$};
\node at (2,2.5) {$x_2$};
\node at (4,1.5) {$\alpha$};
\end{tikzpicture}
\end{figure}
Then $x \coloneqq x_{12} = [x_1, x_2]$ is represented by $\alpha=\partial T$.
We have a group homomorphism $\phi \from \pi_1(T)\cong F_2 \to G$, and we can associate to $\ker(\phi)$ a finite, regular, path-connected cover $q \from S \to T$ with base point $s_0 \in q^{-1}(t_0)$ and $q_*(\pi_1(S,s_0))=\ker(\phi)$.
Note that $S$ is again an orientable surface with $\partial S = q^{-1}(\partial T) = q^{-1}(\alpha) = \alpha_1 \cup \ldots \cup \alpha_m $.
The surface $S$ is homotopy-equivalent to $Y$.
For $i=1,\ldots,m$ let $[\alpha_i] \in H_1(S;\C)$ be the homology class represented by $\alpha_i$ and $[\partial S] \coloneqq \sum_{i=1}^m[\alpha_i] \in H_1(S;\C)$ the homology class represented by $\partial S$.
Denote by $\C[\alpha_i]$ and $\C[\partial S]$ their respective $\C$-spans.
Let $\bigoplus_{i=1}^m \C [ \alpha_i ]$ be the $m$-dimensional $G$-representation, where $G$ acts by permuting the homology classes $[\alpha_i]$.
We then have the following short exact sequence of $G$-representations
\[0 \to \C [\partial S] \xrightarrow{\psi} \bigoplus_{i=1}^m \C [ \alpha_i ] \xrightarrow{\varphi} \Span_{H_1(S; \C)}  \{ [\alpha_1], \ldots, [\alpha_m] \} \to 0,\]
where the map $\psi$ is given by $z \mapsto (z,\ldots,z)$, and $\varphi$ sends $(z_1, \ldots, z_m)$ to $\sum_{i=1}^m z_i [\alpha_i]$.
One can verify that $\psi$ and $\varphi$ are $G$-equivariant.
The exactness follows for example from the exactness of the long exact sequence of the pair $(S,\partial S)$.

Recall that we are interested in understanding $\Span_{H_1(S; \C)} \{ [\alpha_1], \ldots, [\alpha_m] \}$.
Using the short exact sequence it is enough to understand $\bigoplus_{i=1}^m \C [ \alpha_i ]$ and $\C [\partial S]$ as $G$-representations.
Let us assume without loss of generality that $\alpha_1$ contains the base point $s_0 \in S$.
Then
\[ \Stab_G(\alpha_1)=\langle \phi(x) \rangle = K \leq G,\]
since $\alpha$ is simple.
The curves $\alpha_1, \ldots, \alpha_m$ are the elevations of $\alpha$, and $G$ permutes these.
Hence we can identify the curves $\alpha_2, \ldots, \alpha_m$ with the left cosets of $K$ in $G$, because $K$ is the stabilizer of $\alpha_1$.
Choose representatives $g_1 = 1_G, g_2, \ldots, g_m \in G$ of the left cosets of $K$ in $G$.
Then
\[\bigoplus_{i=1}^m \C [ \alpha_i ] = \bigoplus_{i=1}^m \C g_i[ \alpha_1 ] \cong \Ind_K^G(\C_\triv)\]
as $G$-representations by the defining property of the induced representation.
Clearly, $G$ acts trivially on $[\partial S]$.
Thus we have proved the proposition, since by the exactness of the short exact sequence above we have
\[\Span_{H_1(S; \C)} \{ [\alpha_1], \ldots, [\alpha_m] \} = \Span_{\C [G]} \{[\alpha_1] \} \cong \Ind_K^G(\C_\triv) / \C_\triv. \qedhere \] 
\end{proof}

It turns out that the above representation-theoretic property is in general not sufficient to characterize homology classes of elevations of primitive commutators.
A counter-example for $n=2$ will be given in Proposition \ref{Propo_InversePropo1.5}, whose construction is based on the idea of stacking covers and the proof of Theorem \ref{Propo2.1_Inverse}.
We need some preliminary considerations.

For a group $G$, a subgroup $H \leq G$ is \emph{characteristic} if it is invariant under every automorphism of $G$, i.e.\ for all $\varphi \in \Aut(G)$ we have $\varphi(H) \subseteq H$.
If we have a sequence of subgroups of the form $K \mathrel{\unlhd}_{\textnormal{char}} H \mathrel{\unlhd} G$ with $K$ characteristic in $H$ and $H$ normal in $G$, then $K$ is also normal in $G$.

\begin{examp} \label{Lemma_modmHomologyCoverIsCharacteristic}
For all $n,m  \geq 2$ the $\modd \, m$-homology cover is a characteristic cover of the wedge of $n$ circles.
Recall that the $\modd \, m$-homology cover is given by the surjective group homomorphism
\[ \phi \from F_n \rightarrow F_n/[F_n,F_n] \cong \Z^n \xrightarrow{\modd \, m} (\Z / m \Z)^n. \]
It suffices to verify that $\ker(\phi)$ is characteristic in $F_n$.
Let $\alpha \in \Aut(F_n)$.
Since $\ker(\phi) = \langle [x,y], z^m \mid x, y, z \in F_n \rangle$, we conclude that  $\alpha( \ker(\phi)) \subseteq \ker(\phi)$, and hence that the cover is characteristic.
\end{examp}

The $\modd \, 2$-homology cover has the additional property that all primitive commutators lift to primitive elements.

\begin{lem} \label{Lemma_mod2HomCover}
Let  $\phi \from F_2 \to (\Z / 2 \Z)^2$ be the $\modd \, 2$-homology cover as in Example \ref{Lemma_modmHomologyCoverIsCharacteristic}.
Then for $x \coloneqq [l,l']$ any primitive commutator in $F_2$ with $l \neq l'$, its preferred elevation is primitive.
\end{lem}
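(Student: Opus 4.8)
The plan is to model the cover by a cover of surfaces, exactly as in the proof of Proposition~\ref{Propo_CommutatorHom}, and then to invoke the classical fact that on a compact orientable surface with at least two boundary components every boundary curve is a primitive element of the fundamental group.

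First I would set up the model. Since $l\neq l'$ and both lie in a basis of $F_2$, and $F_2$ has rank two, the set $\{l,l'\}$ is itself a basis of $F_2$. Running the first part of the proof of Proposition~\ref{Propo_CommutatorHom} with $x_1,x_2$ replaced by $l,l'$ (alternatively, using that the cover is characteristic, Example~\ref{Lemma_modmHomologyCoverIsCharacteristic}, to first reduce to $x=[x_1,x_2]$), I identify $F_2\cong\pi_1(T,t_0)$ with the fundamental group of the one-holed torus $T$, with $t_0\in\partial T$, so that $l,l'$ are the standard generators and $x=[l,l']$ is represented by $\alpha=\partial T$. Let $q\from S\to T$ be the finite regular path-connected cover associated to $\ker(\phi)$; then $S$ is a compact orientable surface, $S$ is homotopy equivalent to $Y$ via an equivalence carrying $\widetilde{x}^{\,S\to T}$ to $\widetilde{x}^{\,Y\to X}$, and $\partial S=q^{-1}(\alpha)$ is the disjoint union of the elevations of $\alpha$. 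Because $G=(\Z/2\Z)^2$ is abelian, $\phi(x)=[\phi(l),\phi(l')]=1$, hence $k(x)=|\phi(x)|=1$; so every elevation of $\alpha$ covers it with degree one, $\partial S$ has exactly $m=|G|=4$ components, and the preferred elevation $\widetilde{x}$ is the boundary circle of $S$ through the basepoint $s_0$, traversed once.

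Next I would establish (or cite) the key fact: if $\Sigma$ is a compact connected orientable surface of genus $g$ with $b\geq 2$ boundary components, then each component of $\partial\Sigma$ represents a primitive element of the free group $\pi_1(\Sigma)$. For this, present $\pi_1(\Sigma)$ as the free group on $a_1,b_1,\dots,a_g,b_g,c_1,\dots,c_{b-1}$, where $c_1,\dots,c_{b-1}$ are chosen to be (conjugates of) $b-1$ of the boundary components; the remaining boundary component is then conjugate to $w\coloneqq\prod_{i=1}^{g}[a_i,b_i]\cdot c_1\cdots c_{b-1}$. The $c_j$ are primitive as members of a basis, and $w$ is primitive because it has the form $u\,c_{b-1}$ with $u$ a word not involving $c_{b-1}$, so the endomorphism fixing every basis element except $c_{b-1}$ and sending $c_{b-1}\mapsto w$ is an automorphism; here $b-1\geq 1$ is used. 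Since conjugates of primitive elements are primitive, every boundary curve is primitive.

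Finally I would apply this to $\Sigma=S$, which has $b=4\geq 2$ boundary components: the based loop $\widetilde{x}$ lies in the conjugacy class of a boundary curve of $S$, so it is primitive in $\pi_1(S)$, hence primitive in $\pi_1(Y)$. The main obstacle is the surface-group fact of the third paragraph: orientability is genuinely needed (it fails for the M\"obius band, whose boundary is a proper power, although that surface has only one boundary component), and one must handle basepoints and conjugacy carefully; everything else is bookkeeping inherited from the existing proof of Proposition~\ref{Propo_CommutatorHom}. I would also remark that the argument uses only that $G$ is a nontrivial finite abelian group, so the conclusion in fact holds for every such cover, not merely the $\modd\,2$-homology one.
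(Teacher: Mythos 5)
Your argument is correct, but it takes a genuinely different route from the one in the paper. The paper works directly with the $4$-fold cover graph $Y$ (the Cayley graph of $(\Z/2\Z)^2$): it draws the lift of $[x_1,x_2]$, chooses a spanning tree, and exhibits an explicit free basis $\{x,\,x_1^2,\,x_2^2,\,x_2x_1^2x_2^{-1},\,x_2x_1x_2x_1^{-1}\}$ of $\pi_1(Y,y_0)\cong F_5$ containing the preferred elevation; the general primitive commutator is then reduced to $[x_1,x_2]$ via a homotopy equivalence of $X$. You instead transport the problem to the surface model already set up in the proof of Proposition~\ref{Propo_CommutatorHom} and invoke the classical fact that every boundary component of a compact connected orientable surface with at least two boundary components is primitive in its free fundamental group, which you verify by the transvection $c_{b-1}\mapsto w$ on the standard presentation. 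Both proofs are complete. The paper's computation is elementary and self-contained but tied to this particular cover; your argument is more conceptual, isolates exactly what is used (namely $\phi(x)=1$ and $|G|\geq 2$, so that $\partial S$ has at least two components), and, as you observe, yields the stronger statement for every nontrivial finite abelian deck group. If you write this up, the one step worth making explicit is that the basepoint-preserving homotopy equivalence $T\simeq X$ must identify $\ker(\phi)$ on both sides so that it lifts to an equivalence $S\simeq Y$ carrying $\widetilde{x}^{\,S\to T}$ to $\widetilde{x}^{\,Y\to X}$; this is standard covering-space theory, but it is where the comparison of the two elevations actually happens.
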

\begin{proof}
We denote by $p \from (Y, y_0) \to (X,x_0)$ the associated finite, characteristic cover, where $(X,x_0)$ is the wedge of two circles and $x_0$ the point of identification.
Let $x_1$ and $x_2$ be generators of the fundamental groups of the two circles.
We will show the claim for $x \coloneqq [x_1, x_2]$ first.
Then $\phi(x)=0$ and thus $x$ lifts to a closed curve on $Y$ illustrated by the dotted lines.
\begin{eqnarray*}
\begin{tikzpicture}[thick,scale=0.60]
\node (node0) at (3,0) [circle,draw=black,scale=0.7] {$x_0$}; 

\node (node1) at (-7,-2) [circle,draw=black,scale=0.7] {$y_0$}; 
\node (node2) at (-7,2) [circle,draw=black,scale=0.7] {}; 
\node (node3) at (-3,2) [circle,draw=black,scale=0.7] {}; 
\node (node4) at (-3,-2) [circle,draw=black,scale=0.7] {}; 

\draw[->-=.5, line width = 1.4pt] (node1) to[out=110,in=-110] node[left] {$x_1$} (node2);
\draw[->-=.5] (node2) to[out=20,in=160] node[above] {$x_2$} (node3);
\draw[->-=.5] (node3) to[out=-70,in=70] node[right] {$x_1$} (node4); 
\draw[->-=.5] (node4) to[out=-160,in=-20] node[below] {$x_2$} (node1);
\draw[->-=.5, line width = 1.4pt] (node1) to[out=20,in=160] node[above] {$x_2$} (node4);
\draw[->-=.5, line width = 1.4pt] (node4) to[out=110,in=-110] node[left] {$x_1$} (node3);
\draw[->-=.5] (node3) to[out=-160,in=-20] node[below] {$x_2$} (node2); 
\draw[->-=.5] (node2) to[out=-70,in=70] node[right] {$x_1$} (node1);

\draw[->] (-1.3,0) -- (0.2,0) node at (-0.55,0.3){$p$} ;

\draw[->-=.5, scale=5] (node0) to[out=45,in=-45,loop] node[auto]{$x_2$} (node0);
\draw[->-=.5, scale=5] (node0) to[out=135,in=-135,loop] node[left]{$x_1$} (node0);

\draw[->-=.3, dashed] (node1) to[out=100,in=-100] node[near start, right] {$x$} (node2);
\draw[->-=.3, dashed] (node2) to[out=10,in=170] (node3);
\draw[->-=.3, dashed] (node3) to[out=-100,in=100] (node4);
\draw[->-=.3, dashed] (node4) to[out=170,in=10] (node1);

\end{tikzpicture}
\end{eqnarray*}
We need to extend $x$ to a free basis of $\pi_1(Y, y_0)$.
For example 
\[\{x, x_1^2, x_2^2, x_2 x_1^2 x_2^{-1}, x_2 x_1 x_2 x_1^{-1} \}\]
is a free basis of $\pi_1(Y, y_0)$.
Namely, consider the spanning tree illustrated by the bold lines in the picture above.

For $x'$ any primitive commutator we use that there exists a homotopy equivalence of $X$ that sends $x'$ to $x$, since the primitive elements defining $x'$ form a basis of $F_2$.
Since the cover is characteristic, this homotopy equivalence can be lifted to $Y$, and the same argument works.
\end{proof}

There is one last remark before we prove Proposition \ref{Propo_InversePropo1.5}.
If $p \from Z \to X$ and $q \from Y \to Z$ are finite, regular covers with base points $z_0 \in p^{-1}(x_0)$ and $y_0 \in q^{-1}(z_0)$, respectively, such that $p \circ q \from Y \to X$ is a finite, regular cover, then for $l$ any loop on $X$, we have by the uniqueness of lifts and the choice of base points that
\[ \tilde{l}_{p \circ q} = \widetilde{(\tilde{l}_{p})}_{q}.\]

\begin{proof}[Proof of Proposition \ref{Propo_InversePropo1.5}]
Recall that for $n=2$ we would like to prove the existence of a finite, regular cover $p \from Y \to X$, together with $z \in H_1(Y;\Z) \cap \ker(p_*)$ indivisible such that
\begin{enumerate}[label=(\roman*)]
\item $\Span_{\C [G]} \{z\} \cong \Ind_{\langle g \rangle}^G(\C_\triv)/\C_\triv $ for some $g \in G$, where $G$ is the deck group associated to the cover $p$, and
\item $z$ cannot be represented by an elevation of a primitive commutator in $X$.
\end{enumerate}
We will construct $Y$ using iterated covers.
Let $\pi_1(X) \cong F_2$ be the free group on the generators $x_1$ and $x_2$.
Consider the primitive commutators $ x' \coloneqq [x_1,x_2], \; x'' \coloneqq [x_2^{-1},x_1]$.
We start with the $\modd \, 2$-homology cover $p_1 \from Z \to X$ defined by the surjective homomorphism
\[F_2 \to Q \coloneqq (\Z / 2 \Z)^2 = \langle A \rangle \times \langle B \rangle, \; x_1 \mapsto A, \, x_2 \mapsto B.\]
Then $l' \coloneqq \tilde{x}'$ and $l'' \coloneqq \tilde{x}''$ are primitive in $\pi_1(Z)$ by Lemma \ref{Lemma_mod2HomCover}.
We additionally have that $l'$ and $l''$ lie in one basis.
For example, the elements $x_1^2$, $x_2^2$ and $x_2 x_1^2 x_2^{-1}$ complete $l'$ and $l''$ to a basis of $\pi_1(Z)\cong F_5$, using the same notation as in the proof of Lemma \ref{Lemma_mod2HomCover}.

Consider now the $\modd \, 2$-homology cover $p_2 \from Y \to Z$ defined by the surjection
\[ F_5 \to N \coloneqq (\Z / 2 \Z)^5 = \langle C \rangle \times \langle D \rangle \times \langle N_1 \rangle \times \langle N_2 \rangle \times \langle N_3 \rangle , \; l' \mapsto C, \, l'' \mapsto D \]
and extension.
Then $p \coloneqq p_1 \circ p_2 \from Y \to X$ is a finite, regular cover because $p_2$ is characteristic, and we denote its deck group by $G$.
We have the following short exact sequence of groups $ 1 \to N \to G \to Q \to 1$.
Since $0=[x']=[x''] \in H_1(X;\Z)$, it follows that
\[ z' \coloneqq \left[ \tilde{x}'_{p} = \tilde{l}'_{p_2} \right], \; z''\coloneqq \left[ \tilde{x}''_{p} = \tilde{l}''_{p_2} \right] \in \ker(p_*).\]
We write $z' = z_1'+\ldots+z'_{2^5}$ and $z''= z''_1+\ldots+z''_{2^5}$ with $ z'_i, z''_i \in M(W_i)$, where $W_1 = \C_\triv, W_2, \ldots, W_{2^5}$ are representatives of the isomorphism classes of the irreducible representations of $N$.
Define $\hat{z} \coloneqq z'-(z'_1-z''_1)$.
Then $\hat{z}$ still lies in the kernel of $p_*$:
Indeed,  since $(p_2)_*(z')=(p_2)_*(z'_1)$ we have
\[(p_2)_*(\hat{z})=(p_2)_*(z'-(z'_1-z''_1)) = (p_2)_*(z''_1)=(p_2)_*(z''),\]
and consequently, $p_*(\hat{z})= (p_1)_* ((p_2)_*(\hat{z}))= (p_1)_* ((p_2)_*(z''))=p_*(z'') = 0$, since $z'' \in \ker(p_*)$.

By the same argument as in the proof of Theorem \ref{Propo2.1_Inverse}, $\hat{z} \in H_1(Y;\Q)$ and we can choose $q\in \Q$ such that $z \coloneqq q \hat{z} \in \ker(p_*) \leq H_1(Y;\Z)$ is indivisible.
Using that $N$ is Abelian, we can now copy the same idea as in the proof of Theorem \ref{Propo2.1_Inverse} to obtain that $z$ is not the homology class of an elevation of a primitive element in $Z$, and thus also not the homology class of an elevation of a primitive commutator in $X$ by Lemma \ref{Lemma_mod2HomCover}.
\end{proof}

\section{Primitive Commutator Homology}
\label{Section_CommutatorHomology}

The first step in attacking the question whether $H_1^\comm(Y;\C) = \ker(p_*)$ is the obstruction in Theorem \ref{Theorem_RepresentationTheoreticObstructionCommutatorHom}.
We adapt the ideas of the proof of \cite[Theorem 1.4]{FarbHensel_FiniteCoversOfGraphs}.

\begin{proof}[Proof of Theorem \ref{Theorem_RepresentationTheoreticObstructionCommutatorHom}]
We would like to prove that if $V$ is an irreducible $G$-representation that appears in $H_1^\comm(Y;\C)$, then $V \in \Irr^\comm(\phi,G) \setminus \{\C_\triv\}$.
Since $H_1^\comm(Y;\C) \leq \ker(p_*) \cong \bigoplus_{i=2}^{k(G)} M(V_i)$ by Lemma \ref{Lemma_TechnicalCoverings1} \ref{item4_technicalCoverings1}, it is clear that $V\neq \C_\triv$.

Let $x=[l,l']$ be a primitive commutator in $F_n$, and set $K \coloneqq \langle \phi(x) \rangle \leq \langle \phi(l), \phi(l') \rangle \eqqcolon H \leq G$.
Let $V$ be an irreducible $G$-representation.
Then writing $V^{K} = \{v \in V : gv=v \textrm{ for all } g \in K \}$ for the space of fixed points of $K$, Proposition \ref{Propo_CommutatorHom} and Frobenius reciprocity (see e.g.\ \cite[Lemma 5.2]{Isaacs_CharacterTheoryOfFiniteGroups})
imply that
\[
\langle \Span_{\C [G]} \{ [\tilde{x}] \}, V \rangle_G \leq \langle \Ind_{K}^G(\C_\triv), V \rangle_G = \langle \C_\triv, \Res_{K}^G(V) \rangle_{K} = \dim(V^{K}),
\]
where the last equality follows from the orthogonality relations.

An irreducible representation $V$ appears in $H_1^\comm(Y;\C)$ if and only if there exists a primitive commutator $x=[l,l']$ in $F_n$ such that 
\[ \langle \Span_{\C [G]} \{ [\tilde{x}] \}, V \rangle_G \neq 0.\]
By the above, this implies $\dim(V^{K}) \neq 0$, which is equivalent to the existence of some non-zero $v \in V$ with $\phi(x)(v) = v$.
Thus, for $V$ to appear in $\Span_{\C [G]} \{ [\tilde{x}] \} $, we necessarily need $V \in \Irr^\comm(\phi,G)$.
\end{proof}

We will continue by studying $H_1^{\comm}(Y;\C)$ in some examples.
If $n=2$ and $G$ is Abelian, we have $\phi(x_{12})=1_G$ and we obtain by the semisimplicity of $M=H_1(Y; \C)$ that
\[ \Span_{\C [G]} \{ [\tilde{x}_{12}] \} \cong \Ind_{\{1_G\}}^G(\C_{\triv}) / \C_{\triv} \cong \C [G] / \C_{\triv} \cong  \bigoplus_{i=2}^{|G|} M(V_i). \]
This implies that $\Span_{\C [G]} \{ [\tilde{x}_{12}] \} \cong \ker(p_*)$ by Lemma \ref{Lemma_TechnicalCoverings1} \ref{item4_technicalCoverings1}, and therefore $H_1^{\comm}(Y;\C)=\ker(p_*)$.

On the other hand, there are examples in rank two with $H_1^{\comm}(Y;\C)\neq \ker(p_*)$, since not every irreducible representation has the property that the image of a primitive commutator has a non-zero fixed point, in other words $\Irr^{\comm}(\phi, G) \neq \Irr(G)$.
\begin{examp} \label{Example}
For $n=2$ any surjective homomorphism $\phi \from F_2 \to S_3$, where $S_3$ denotes the symmetric group on three elements, satisfies $\Irr^{\comm}(\phi, S_3) \neq \Irr(S_3)$.
In fact, the irreducible two-dimensional representation of $S_3$ does not lie in $\Irr^{\comm}(\phi, S_3)$.
It can be realized by $S_3$ acting on $V\coloneqq \C^3/\langle e_1+e_2+e_3\rangle$ by permutation of the coordinates.
Since $\phi$ is surjective the image of a primitive commutator is either the cycle $(1 2 3)$ or $(1 3 2)$.
Computing the action of $(123)$ on $V$, we see that one is not an eigenvalue, thus $(123)$ does not have a fixed vector.
Similarly for $(132)$.
Thus $V \notin \Irr^{\comm}(\phi, S_3)$.
\end{examp}

A more general answer to this question is presented in the following for every $n \geq 2$.
To see that there are examples where $H_1^{\comm}(Y;\C) \neq \ker(p_*)$ for certain covers $p \from Y \to X$, we use iterated covers to relate the question to the one for primitive homology.
Note that in general the composition of two regular covers is not again regular, but every finite, regular cover is itself covered by a finite, characteristic cover.
This is true by the following observation: if $G$ is a finitely generated group and $H \mathrel{\unlhd} G$ a normal subgroup of finite index $k$, then there is a subgroup $H^* \leq H$ which is of finite index in $G$ and characteristic in $G$.
Namely, we can take $H^* \coloneqq \bigcap_{H' \mathrel{\unlhd} G \textnormal{ normal}, \,[G:H']=k} H'$.

In order to prove that in general $H_1^{\comm} \neq \ker(p_*)$, we reduce the case of primitive commutator homology to the case of primitive homology by looking at the $\modd \, 2$-homology cover.
The proof of Lemma \ref{Lemma_mod2HomCover} can be generalised to show that for every $n$ the $\modd \, 2$-homology cover as in Example \ref{Lemma_modmHomologyCoverIsCharacteristic} has the property that primitive commutators lift to primitive elements.
Then we can apply the result of Malestein-Putman in \cite[Example 1.3]{MalesteinPutman_SCCFiniteCoversOfSurfaces} to conclude.
For this we need to understand how primitive homology behaves in iterated covers.
To avoid confusion we introduce the following notation:
If $Y \to X$ is a finite, regular cover of finite graphs, we will from now on
write $H_1^S(Y \to X;\C)$ instead of $H_1^S(Y;\C)$.

\begin{lem} \label{Lemma_PrimToPrim}
Let $q \from Z \to Y$ and $p \from Y \to X$ be finite, regular covers of finite graphs such that $p \circ q \from Z \to X$ is a finite, regular cover.
Then
\[ q_*(H_1^\prim(Z \to X;\C)) \subseteq H_1^\prim(Y \to X; \C). \]
In other words, primitive homology gets mapped into primitive homology.
\end{lem}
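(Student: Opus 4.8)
The plan is to combine two facts already established in the paper: the compatibility of preferred elevations under a composition of covers, displayed just before the proof of Proposition~\ref{Propo_InversePropo1.5}, and the identity $p_*([\tilde{m}])=|\phi(m)|\cdot[m]$ for a preferred elevation, recorded in Section~\ref{Section_primitive}. First I would fix notation: write $G_Z\coloneqq\Deck(Z\to X)$ and $G_Y\coloneqq\Deck(Y\to X)$, and let $\psi\from\pi_1(Y)\to N\coloneqq\Deck(Z\to Y)$ be the epimorphism associated to the cover $q$. Since $Y\to X$ and $Z\to X$ are both regular, the map $q$ intertwines the two deck actions: each $g\in G_Z$ descends to some $\bar{g}\in G_Y$ with $q\circ g=\bar{g}\circ q$, whence $q_*(g\cdot c)=\bar{g}\cdot q_*(c)$ for every $c\in H_1(Z;\C)$. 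Consequently $q_*$ sends the $\C[G_Z]$-span of a set of vectors into the $\C[G_Y]$-span of their images, and since $H_1^\prim(Y\to X;\C)$ is a $\C[G_Y]$-submodule it will suffice to show that $q_*\bigl([\tilde{s}^{Z \to X}]\bigr)\in H_1^\prim(Y\to X;\C)$ for every primitive $s\in F_n=\pi_1(X)$.

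For the core step I would fix such an $s$ and set $l\coloneqq\tilde{s}^{Y \to X}\in\pi_1(Y,y_0)$, regarded as a loop in $Y$. The compatibility relation then reads $\tilde{s}^{Z \to X}=\widetilde{(\tilde{s}^{Y \to X})}^{Z \to Y}=\tilde{l}^{Z \to Y}$. Pushing forward along $q$ and applying the analogue of the identity from Section~\ref{Section_primitive} to the cover $q$, namely $q_*\bigl([\tilde{m}^{Z \to Y}]\bigr)=|\psi(m)|\cdot[m]$ for $m\in\pi_1(Y)$ (which holds because $q$ carries the lift $\tilde{m}^{Z \to Y}$ onto the loop $m^{k(m)}$ with $k(m)=|\psi(m)|$), one gets
\[ q_*\bigl([\tilde{s}^{Z \to X}]\bigr)=|\psi(l)|\cdot[l]=|\psi(l)|\cdot[\tilde{s}^{Y \to X}]. \]
Since $s$ is primitive, $[\tilde{s}^{Y \to X}]$ is one of the defining generators of $H_1^\prim(Y\to X;\C)$, so the right-hand side lies in that subspace; together with the reduction of the first paragraph this proves the lemma. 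I would also remark that primitivity of the intermediate loop $l$ in $\pi_1(Y)$ --- which would anyway follow from the fact that elevations of primitive elements are primitive --- is not needed; only primitivity of $s$ in $\pi_1(X)$ is used.

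I do not expect a genuine obstacle here; the argument is essentially bookkeeping. The two points that require care are that the compatibility relation for preferred elevations genuinely applies --- which presupposes that $p\circ q$ is a finite, regular cover so that $\tilde{s}^{Z \to X}$ is defined, exactly the standing hypothesis --- and that $q_*$ is equivariant for the quotient map $G_Z\twoheadrightarrow G_Y$, which is where regularity of both covers over $X$ enters. One could alternatively try to avoid the equivariance step by noting that the set of primitive elements of $F_n$ is invariant under conjugation, so that $H_1^\prim(Z\to X;\C)$ is already the plain $\C$-span of the homology classes of all elevations of primitive loops rather than only the preferred ones; but tracking basepoints through that reformulation is fiddlier than invoking equivariance directly, so I would keep the plan as above.
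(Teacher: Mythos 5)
Your proof is correct and follows essentially the same route as the paper: the paper's entire argument is the single identity $q_*\bigl(\bigl[\tilde{l}^{Z \to X}\bigr]\bigr) = k\bigl[\tilde{l}^{Y \to X}\bigr]$ for some $k \in \Z$, which is exactly your core step with the constant $k=|\psi(l)|$ made explicit. The additional details you supply (the equivariance of $q_*$ under the deck-group quotient and the compatibility of preferred elevations) are the justifications the paper leaves implicit.
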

\begin{proof}
For $l$ a primitive element in $\pi_1(X)$, we have for some $k \in \Z$
\[ q_* \left( \left[  \tilde{l}_{p\circ q} \right] \right) = k  \left[ \tilde{l}_{p} \right] \in H_1^\prim(Y \to X;\C). \qedhere \]
\end{proof}

\begin{corol}
Let the setup be as in Lemma \ref{Lemma_PrimToPrim} with the condition that $H_1^{\prim}(Y \to X;\C) \neq H_1(Y;\C)$ as $K$-representations where $K \coloneqq \Deck(Y,p)$.
Then 
\[ H_1^{\prim}(Z \to X; \C) \neq H_1(Z;\C) \]
as $G$-representations with $G \coloneqq \Deck(Z,p \circ q)$.
\end{corol}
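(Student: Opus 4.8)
The plan is to prove the contrapositive: assuming $H_1^{\prim}(Z \to X;\C) = H_1(Z;\C)$, I would deduce $H_1^{\prim}(Y \to X;\C) = H_1(Y;\C)$, which contradicts the hypothesis of the corollary. The whole argument is then just a diagram chase built on Lemma \ref{Lemma_PrimToPrim}, so I expect no genuine obstacle; the only points requiring care are that the intermediate projection $q_*$ is surjective and that it is compatible with the deck-group actions, so that ``subrepresentation'' is preserved.

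First I would record two facts about the middle cover $q \from Z \to Y$. Since $Z$ is a finite graph and $q$ is a finite-sheeted covering, the transfer map $q_{\#} \from C_\bullet(Y) \to C_\bullet(Z)$ satisfies $q_* \circ q_{\#} = \deg(q) \cdot \id$, exactly as in the discussion preceding Lemma \ref{Lemma_TechnicalCoverings1}; hence $q_* \from H_1(Z;\C) \to H_1(Y;\C)$ is surjective. Second, because $Z \to X$ factors through $Y \to X$, there is a canonical surjection $G = \Deck(Z, p \circ q) \twoheadrightarrow K = \Deck(Y,p)$, the $G$-action on $Z$ descends along $q$ to the $K$-action on $Y$ through this surjection, and $q_*$ is equivariant for these actions. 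In particular $q_*$ sends $G$-subrepresentations of $H_1(Z;\C)$ to $K$-subrepresentations of $H_1(Y;\C)$.

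Now comes the chain of inclusions. By Lemma \ref{Lemma_PrimToPrim} we have $q_*\bigl(H_1^{\prim}(Z \to X;\C)\bigr) \subseteq H_1^{\prim}(Y \to X;\C)$. Under the standing assumption $H_1^{\prim}(Z \to X;\C) = H_1(Z;\C)$, the left-hand side equals $q_*\bigl(H_1(Z;\C)\bigr) = H_1(Y;\C)$ by surjectivity of $q_*$. Therefore $H_1(Y;\C) \subseteq H_1^{\prim}(Y \to X;\C) \subseteq H_1(Y;\C)$, forcing $H_1^{\prim}(Y \to X;\C) = H_1(Y;\C)$, which is the contradiction sought. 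Finally, to be sure the statement ``$\neq$ as $G$-representations'' means what we think, I would note that $H_1(Z;\C)$ is finite-dimensional, so the subrepresentation $H_1^{\prim}(Z \to X;\C) \subseteq H_1(Z;\C)$ is equal to $H_1(Z;\C)$ if and only if it is abstractly isomorphic to it; thus the displayed inequality is exactly the strict inclusion that the argument has ruled out.

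The hardest part, such as it is, is the bookkeeping in the second paragraph: one must check that a tower of regular covers $Z \to Y \to X$ with $Z \to X$ regular really does induce a quotient map of deck groups and an equivariant $q_*$. This is standard, but I would spell it out (or cite it) rather than leave it implicit, since the statement of the corollary is phrased at the level of $G$- and $K$-representations.
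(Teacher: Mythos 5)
Your proposal is correct and is exactly the argument the paper intends: the paper's proof consists of the single remark that the claim is ``straightforward using that $q_* \from H_1(Z;\C) \to H_1(Y;\C)$ is surjective,'' and your contrapositive chain $H_1(Y;\C) = q_*(H_1(Z;\C)) = q_*(H_1^{\prim}(Z \to X;\C)) \subseteq H_1^{\prim}(Y \to X;\C)$ is precisely the intended diagram chase, with the equivariance and dimension-count remarks being reasonable (if routine) extra care.
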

\begin{proof}
This is straightforward using that $q_* \from H_1(Z;\C) \to H_1(Y;\C)$ is surjective.
\end{proof}

The last corollary implies that we can assume without loss of generality that the cover constructed by Malestein and Putman in \cite[Example 1.3]{MalesteinPutman_SCCFiniteCoversOfSurfaces} is characteristic by passing to a finite, characteristic cover, which will satisfy that primitive homology is not all of homology.
We bring everything together in the following proof.

\begin{proof}[Proof of Theorem \ref{Thm_H1CommNeqKer}]
Recall that for every $n \geq 2$ we would like to construct a finite, regular cover $p \from Z \to X$ such that $H_1^{\comm}(Z \to X;\C) \neq \ker(p_*)$.

Let $p' \from Y \to X$ be the $\modd \, 2$-homology cover with group of deck transformations $K= (\Z / 2 \Z)^n$.
By Example \ref{Lemma_modmHomologyCoverIsCharacteristic}, we know that this cover is characteristic.
Lemma \ref{Lemma_mod2HomCover} tells us that all primitive commutators in $X$ lift to primitive elements in $Y$.
The space $Y$ is again a connected graph with free fundamental group, so we can apply the result of Malestein and Putman, see \cite[Theorem C, Example 1.3]{MalesteinPutman_SCCFiniteCoversOfSurfaces}, to find a finite, regular cover $q \from Z \to Y$ with deck group $H$ such that
\[ H_1^{\prim}(Z \to Y; \C) \neq H_1(Z;\C)\]
as $H$-representations.
By the above considerations we can assume that this cover is characteristic.
Therefore,  $p \coloneqq p' \circ q \from Z \to X$ is regular.
Denote its deck group by $G$.
\begin{equation*}
\begin{tikzcd}
Z \arrow{d}{q \textnormal{ (finite, characteristic cover of Malestein-Putman)}} \arrow[bend right=60,swap]{dd}{p \textnormal{ (finite, regular)}} \\
Y \arrow{d}{p' \; (\modd \, 2 \textnormal{-homology cover})} \\
X
\end{tikzcd}
\end{equation*}
Let us assume for a contradiction that $H_1^{\comm}(Z \to X;\C) = \ker(p_*)$.
Then Lemma \ref{Lemma_TechnicalCoverings1} \ref{item4_technicalCoverings1} implies that $H_1^{\comm}(Z \to X;\C) = \bigoplus_{i=2}^{k(G)} M(V_i),$
with $M=H_1(Z;\C)$ as $G$-representations.
We also know by Lemma \ref{Lemma_TechnicalCoverings1} \ref{item2_technicalCoverings1} that 
\[ M(V_1) = p_\#(H_1(X;\C)) = p_\# \left( \Span_\C \{ [l] \mid  l \in \pi_1(X) \textnormal{ primitive} \} \right). \]
By definition, we have $p_\#([l]) = \sum_{g \in G} g \,  [\tilde{l}]$.
For $l$ a primitive element, it follows that $p_\#([l]) \in H_1^{\prim}(Z;\C)$.
Thus we obtain $M(V_1) \leq H_1^{\prim}(Z \to X; \C)$.
This implies that
\[ H_1(Z;\C) = H_1^{\prim}(Z \to X;\C) + H_1^{\comm}(Z \to X;\C).\]
We would like to conclude from this equality that $H_1(Z;\C) \leq H_1^{\prim}(Z \to Y;\C)$, which contradicts the choice of the cover $q \from Z \to Y$.
We know that lifts of primitive elements are primitive, and by Lemma \ref{Lemma_mod2HomCover} that for the $\modd \, 2$-homology cover primitive commutators in $X$ also lift to primitive elements in $Y$.
Set 
\[ S \coloneqq \left\lbrace \tilde{x}_{p'} \mid x \in \pi_1(X) \textnormal{ primitive or a primitive commutator} \right\rbrace \subseteq \pi_1(Y).\]
Then $S \subseteq \{l \in \pi_1(Y) \textnormal{ primitive} \}$.
Clearly, for any two subsets $S' \subseteq S'' \subseteq F_m$ we have $H_1^{S'}(Y;\C) \leq H_1^{S''}(Y;\C)$.
By the above equation together with the last observation, we obtain
\[ H_1(Z;\C) = H_1^S(Z \to Y; \C) \leq H_1^{\prim}(Z \to Y;\C). \qedhere \]
\end{proof}


\bibliographystyle{amsalpha}
\bibliography{bibl}{}

\end{document}